\newcommand{\G}{\mathcal G}
\newcommand{\e}{\varepsilon}
\newcommand{\N}{\mathbb{N}}
\newcommand{\Z}{\mathbb{Z}}
\renewcommand{\d}{{\rm d}}
\renewcommand{\ll }{\langle\hspace{-.7mm}\langle }
\newcommand{\rr }{\rangle\hspace{-.7mm}\rangle }
\newcommand{\Cqi}{\sim_{C-q.i.}}
\newtheorem{thm}{Theorem}[section]
\newtheorem{cor}[thm]{Corollary}
\newtheorem{lem}[thm]{Lemma}
\newtheorem{prop}[thm]{Proposition}
\theoremstyle{definition}
\newtheorem{defn}[thm]{Definition}
\theoremstyle{remark}
\newtheorem{rem}[thm]{Remark}
\newtheorem{ex}[thm]{Example}
\renewcommand{\S}{\mathcal S}
\renewcommand{\L}{\mathcal L}
\newcommand{\Lw}{\mathcal L_{\omega_1, \omega}}
\newcommand{\DL}{\mathit{DL}}
\newfont{\eufm}{eufm10}
\begin{document}

\title{Quasi-isometric diversity of marked groups}

\author{A. Minasyan, D. Osin, S. Witzel}\thanks{The second named author was supported by the NSF grant DMS-1612473. The third named author was supported by a Feodor Lynen Research Fellowship of the Humboldt Foundation and the DFG Heisenberg grant WI 4079/6.}

\address[A. Minasyan]{School of Mathematical Sciences,
University of Southampton, Highfield, Southampton, SO17 1BJ, United
Kingdom.}
\email{aminasyan@gmail.com}
\address[D. Osin]{Department of Mathematics, Vanderbilt University,
1326 Stevenson Center, Nashville, TN 37240, USA.}
\email{denis.v.osin@vanderbilt.edu}
\address[S. Witzel]{Mathematical Institute, Gie\ss{}en University, Arndtstr. 2, 35392 Gie\ss{}en, Germany.}
\email{stefan.witzel@math.uni-giessen.de}
\date{}
\subjclass[2010]{20F69, 20F65, 03E15, 03C60}

\begin{abstract}
We use basic tools of descriptive set theory to prove that a closed set $\mathcal S$ of marked groups has $2^{\aleph_0}$ quasi-isometry classes provided every non-empty open subset of $\mathcal S$ contains at least two non-quasi-isometric groups. It follows that every perfect set of marked groups having a dense subset of finitely presented groups contains $2^{\aleph_0}$ quasi-isometry classes. These results account for most known constructions of continuous families of non-quasi-isometric finitely generated groups. We use them to prove the existence of $2^{\aleph_0}$ quasi-isometry classes of finitely generated groups having interesting algebraic, geometric, or model-theoretic properties (e.g., such groups can be torsion, simple, verbally complete or they
can all have the same elementary theory).
\end{abstract}

\maketitle

\section{Introduction}

Quasi-isometry is an equivalence relation that identifies metric spaces having the same large scale geometry. It is especially useful in geometric group theory and plays essentially the same role as the isomorphism relation in algebra.

It is well-known that there exist $2^{\aleph_0}$ quasi-isometry classes of finitely generated groups. Indeed, this immediately follows from the existence of $2^{\aleph_0}$ groups with pairwise inequivalent growth functions proved by Grigorchuk \cite{Gri}. A simpler argument using small cancellation theory was given by Bowditch in \cite{Bow}.

More recently, the question of quasi-isometric diversity of certain particular classes of groups has received considerable attention. For example, Cornulier and Tessera showed that there are $2^{\aleph_0}$ quasi-isometry classes of finitely generated solvable groups \cite{CT}; Brieussel and Zheng showed that such examples can be found among (locally finite)-by-cyclic groups. A continuous family of pairwise non-quasi-isometric groups with property $\mathit{FP}$ was constructed by Kropholler, Leary and Soroko \cite{KLS}. Gruber and Sisto showed that Gromov's construction of groups associated to a random labeling of certain expanders yields $2^{\aleph_0}$ quasi-isometry classes \cite{GS}.

Our main goal is to show that these results can be thought of as particular manifestations of a more general phenomenon. To state our main theorem we need to recall the definition of the space of marked groups introduced by Grigorchuk \cite{Gri}.

Let $\G_n$ denote the set of all pairs $(G,X)$, where $G$ is a group generated by an ordered $n$-tuple $X\in G^n$, considered up to the following equivalence relation: $(G, (x_1, \ldots, x_n))$ and $(H, (y_1, \ldots, y_n))$ are \emph{equivalent} if the map $x_i\to y_i$ extends to an isomorphism $G\to H$. Every such  pair can be naturally identified with a normal subgroup of $F_n$, the free group of rank $n$; namely, $(G,X)$ corresponds to the kernel of the natural homomorphism $F_n\to G$ induced by mapping a fixed basis of $F_n$ to $X$. The product topology on $2^{F_n}$ induces a structure of a second countable compact Hausdorff topological space on the set of normal subgroups of $F_n$ and, via the above identification, on $\mathcal G_n$. We  identify $\G_n$ with a clopen subspace of $\G_{n+1}$ via the map $$(G, (x_1, \ldots,x_n))\mapsto (G, (x_1, \ldots, x_n, 1)).$$  The topological union $\G=\bigcup_{n\in \mathbb N}\G_n$ is called the \emph{space of marked finitely generated groups.}

Recall that a subset of a topological space is said to be \emph{comeagre} if it is the intersection of countably many subsets with dense interiors. We will say that a subset $\mathcal{S}\subseteq \mathcal G$ is \emph{quasi-isometrically diverse} if  every comeagre subset of $\mathcal{S}$ has $2^{\aleph_0}$ quasi-isometry classes.

\begin{thm}\label{main}
Let $\mathcal{S}$ be a non-empty closed subset of $\mathcal G$. If every non-empty open subset of $\mathcal{S}$ contains at least two non-quasi-isometric groups, then $\mathcal{S}$ is quasi-isometrically diverse.
\end{thm}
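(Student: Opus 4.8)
The plan is to produce, inside an arbitrary comeagre subset $\mathcal C\subseteq\mathcal S$, a Cantor set of pairwise non-quasi-isometric marked groups, which at once gives $2^{\aleph_0}$ quasi-isometry classes in $\mathcal C$. First, reductions. Since a singleton cannot contain two non-quasi-isometric groups, the hypothesis forces $\mathcal S$ to have no isolated points, and it is evidently inherited by every non-empty open subset of $\mathcal S$. As finding $2^{\aleph_0}$ quasi-isometry classes inside $\mathcal C\cap\mathcal G_N$ suffices, we may replace $\mathcal S$ by a non-empty clopen subset $\mathcal S\cap\G_N$ and thus assume that $\mathcal S$ is a non-empty, compact, metrizable, zero-dimensional space without isolated points, i.e.\ homeomorphic to the Cantor set. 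Taking interiors, we fix a decreasing sequence $D_0\supseteq D_1\supseteq\cdots$ of dense open subsets of $\mathcal S$ with $\bigcap_n D_n\subseteq\mathcal C$; it is enough to place $2^{\aleph_0}$ pairwise non-quasi-isometric groups in $\bigcap_n D_n$.

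The key structural input is that quasi-isometry of finitely generated groups is an $F_\sigma$ relation. For $k\in\N$ let $R_k\subseteq\mathcal S\times\mathcal S$ be the set of pairs $(G,H)$ admitting a $(k,k)$-quasi-isometry $G\to H$ that sends $e_G$ to within distance $k$ of $e_H$. After left-translating a given quasi-isometry to fix the identity and enlarging $k$, one sees that $\bigcup_k R_k$ is exactly the quasi-isometry relation on $\mathcal S$. Each $R_k$ is closed: the ball of any fixed radius in the Cayley graph of $(G,X)$, together with its labelled graph structure, depends only on finitely many coordinates of the corresponding normal subgroup of $F_n$, so along $(G_i,H_i)\to(G,H)$ these balls stabilise canonically; the bounds $(k,k)$ and the basepoint condition confine each witnessing map, on any fixed ball, to a finite set of possibilities, and a diagonal (König-type) argument extracts a map witnessing $(G,H)\in R_k$, whose coarse surjectivity survives because preimages of bounded balls remain bounded.

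Closedness of the $R_k$ combines with the hypothesis to give the crucial \emph{localisation}: for non-empty open $A,B\subseteq\mathcal S$ and any $k$, there are non-empty clopen $A'\subseteq A$, $B'\subseteq B$ with $(A'\times B')\cap R_k=\varnothing$. Indeed, were $R_k$ dense in $A\times B$ it would, being closed, contain $A\times B$; fixing $H_0\in B$, every group in $A$ would then be quasi-isometric to $H_0$, hence all of $A$ would lie in one quasi-isometry class, contradicting that $A$ contains two non-quasi-isometric groups. Now a fusion construction. Fix an enumeration $\big((\sigma_j,\tau_j,k_j)\big)_{j\in\N}$ of all triples with $\sigma_j,\tau_j\in 2^{<\N}$ incomparable, $\lvert\sigma_j\rvert,\lvert\tau_j\rvert\le j+1$, in which every such triple occurs. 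We build non-empty clopen sets $V_\sigma\subseteq\mathcal S$ ($\sigma\in 2^{<\N}$) with $\overline{V_{\sigma 0}}\cup\overline{V_{\sigma 1}}\subseteq V_\sigma$, $\overline{V_{\sigma 0}}\cap\overline{V_{\sigma 1}}=\varnothing$, $\operatorname{diam}V_\sigma\le 2^{-\lvert\sigma\rvert}$, $V_\sigma\subseteq D_{\lvert\sigma\rvert}$, and, for each $j<\lvert\sigma\rvert$: $V_\rho\times V_{\rho'}$ disjoint from $R_{k_j}$ whenever $\rho\supseteq\sigma_j$, $\rho'\supseteq\tau_j$ and $\lvert\rho\rvert=\lvert\rho'\rvert=\lvert\sigma\rvert$. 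At stage $n$ we split each level-$n$ set into two as required (using that $\mathcal S$ is perfect and the $D_i$ are dense open), then, over the finitely many pairs of level-$(n+1)$ nodes extending $\sigma_n$ resp.\ $\tau_n$, shrink them one at a time via the localisation lemma to make their products disjoint from $R_{k_n}$; every shrinking preserves the earlier conditions.

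Finally, $\{\varphi(x)\}=\bigcap_n V_{x|_n}$ defines a continuous injection $\varphi\colon 2^{\N}\to\mathcal S$ (disjoint closures give injectivity) with $\varphi(x)\in\bigcap_n D_n\subseteq\mathcal C$. If $x\ne y$ first differ at coordinate $n$, then for every $k$ the triple $(x|_{n+1},y|_{n+1},k)$ equals some $(\sigma_j,\tau_j,k_j)$ with $j\ge n$, so the $j$-th condition forces $V_{x|_{j+1}}\times V_{y|_{j+1}}$ disjoint from $R_k$, hence $(\varphi(x),\varphi(y))\notin R_k$; as $k$ is arbitrary, $(\varphi(x),\varphi(y))\notin\bigcup_k R_k$, i.e.\ $\varphi(x)$ and $\varphi(y)$ are not quasi-isometric. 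Thus $\varphi(2^{\N})\subseteq\mathcal C$ consists of $2^{\aleph_0}$ pairwise non-quasi-isometric groups, and $\mathcal S$ is quasi-isometrically diverse. The step demanding the most care is the closedness of the relations $R_k$ — the normalisation at the identity and the compactness/diagonal extraction of a limiting quasi-isometry together with its coarse surjectivity — while the fusion itself is routine bookkeeping once the localisation lemma is available.
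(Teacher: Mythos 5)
Your proof is correct and follows essentially the same route as the paper: you establish that the bounded-constant quasi-isometry relations are closed (the paper's Lemma~2.5, due to Thomas), deduce from the hypothesis that each such relation is nowhere dense in $\mathcal S\times\mathcal S$, and then extract a Cantor set of pairwise non-quasi-isometric groups inside an arbitrary comeagre subset. The only real difference is that where the paper invokes Mycielski's theorem on meagre equivalence relations as a black box, you reprove the relevant special case by hand via your localisation lemma and a standard fusion construction.
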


The proof of Theorem \ref{main}, given in Section~\ref{sec:3}, is fairly short and combines an observation, originally made by Thomas \cite{T}, that $C$-quasi-isometry defines a closed relation on $\G\times\G$, with a purely topological argument. Nevertheless, Theorem~\ref{main} allows one to avoid non-trivial computations of particular invariants used to distinguish the quasi-isometry classes in \cite{Bow,CT,GS,KLS} via the following corollary. For a more detailed discussion, we refer to Examples \ref{ex1}--\ref{ex3}.

\begin{cor}\label{fp}
Every non-empty perfect subset of $\G$ containing a dense subset of finitely presented groups is quasi-isometrically diverse.
\end{cor}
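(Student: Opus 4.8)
The plan is to deduce Corollary \ref{fp} from Theorem \ref{main} by verifying the hypothesis of the latter, namely that every non-empty open subset of a perfect set $\mathcal{S}$ with a dense subset of finitely presented groups contains two non-quasi-isometric groups. First I would observe that we may as well assume $\mathcal{S}$ is closed: since quasi-isometric diversity concerns comeagre subsets and $\mathcal{S}$ being perfect means it has no isolated points, one can pass to the closure $\overline{\mathcal{S}}$ — this is still perfect, and a dense subset of finitely presented groups in $\mathcal{S}$ remains dense in $\overline{\mathcal{S}}$; one then checks that diversity of $\overline{\mathcal{S}}$ yields diversity of $\mathcal{S}$ (a comeagre subset of $\mathcal{S}$, together with the points of $\overline{\mathcal{S}}\setminus\mathcal{S}$, or rather its intersection behavior, needs a small argument, but morally the Baire-category structure passes between a dense subset and its closure). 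So assume $\mathcal{S}$ is closed and perfect with $\mathcal{D}\subseteq\mathcal{S}$ a dense subset of finitely presented marked groups.

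The key step is the following. Fix a non-empty open $U\subseteq\mathcal{S}$. Since $\mathcal{S}$ is perfect, $U$ is infinite; since $\mathcal{D}$ is dense, $U$ contains a finitely presented marked group $(G,X)$. Now I would invoke the standard fact that in the space of marked groups a finitely presented group has a neighbourhood basis consisting of marked \emph{quotients} of $G$ — more precisely, there is a basic open neighbourhood $V\subseteq U$ of $(G,X)$ such that every $(H,Y)\in V$ is a marked quotient of $(G,X)$ (all the defining relators of $G$ already hold in $H$). Since $V$ is infinite (perfectness again), it contains infinitely many pairwise non-isomorphic marked quotients of $G$. The point now is that a finitely presented group has only countably many (indeed, its finitely presented quotients are classified and) — more to the point, $G$ is quasi-isometric to only countably many of its quotients is \emph{not} what we need; rather, we use that there are only finitely many quasi-isometry classes among quotients of $G$ sitting at bounded "distance", or — cleanest — we use that a finitely presented group has at most countably many quotients up to isomorphism but the quasi-isometry type is a quotient invariant that cannot be constant on an infinite family unless... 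Here is the honest version: if all the infinitely many groups in $V$ were quasi-isometric to each other, they would in particular all be quasi-isometric to $(G,X)$; but distinct marked quotients of a fixed finitely presented group $G$ lying in a small enough neighbourhood are obtained by adding relations, and by a counting/compactness argument (e.g.\ using that $\mathcal S$ is closed and Thomas's observation that each $C$-quasi-isometry relation is closed) an infinite such family must break into infinitely many $C$-quasi-isometry classes for every fixed $C$, hence contains two non-quasi-isometric members.

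I expect the main obstacle to be making the last sentence of the previous paragraph rigorous without circularity: one must genuinely rule out that a neighbourhood of a finitely presented group consists of a single quasi-isometry class. The clean way is to note that if $U$ contained only one quasi-isometry class then, picking the finitely presented $(G,X)\in U$ and a neighbourhood $V\subseteq U$ of quotients of $G$, every group in $V$ would be $C$-quasi-isometric to $(G,X)$ for some uniform $C$ — uniformity coming from the fact that the relevant $C$-quasi-isometry sets $\{(A,B): A\Cqi B\}$ are closed (Thomas) and cover $V\times\{(G,X)\}$, so by compactness of $\overline V$ a single $C$ works on a smaller neighbourhood; but a finitely presented group is $C$-quasi-isometric to only finitely many of its marked quotients for each fixed $C$ (the $C$-quasi-isometry constraints bound the geometry, hence bound which relations can have been added), contradicting that the neighbourhood is infinite. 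Assembling these pieces verifies the hypothesis of Theorem \ref{main}, and the corollary follows. The delicate bookkeeping is entirely in the compactness-plus-finiteness argument; everything else is routine topology of the space of marked groups.
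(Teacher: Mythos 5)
Your overall strategy --- verify the hypothesis of Theorem \ref{main} for every non-empty open subset of $\mathcal S$ --- is the right one, but the way you try to produce two non-quasi-isometric groups in a given open set $U$ has a genuine gap at its core. Everything hinges on the claim that ``a finitely presented group is $C$-quasi-isometric to only finitely many of its marked quotients for each fixed $C$,'' which you assert with only the parenthetical ``the $C$-quasi-isometry constraints bound the geometry, hence bound which relations can have been added.'' This is not a proof, and the claim is far from obvious: Example \ref{pqi} of the paper exhibits a single finitely generated group with a \emph{perfect} (hence locally uncountable) set of pairwise non-isomorphic marked quotients that are all quasi-isometric to one another --- and, by the very Baire-category argument you sketch, pairwise $\Cqi$ for a single $C$ on some non-empty open subset. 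So any correct version of your finiteness claim would have to exploit finite presentability of $G$ in an essential way that you never identify, and without it the argument collapses into exactly the circularity you yourself flag. (A secondary, harmless point: in this paper ``perfect'' means closed with no isolated points, so your opening discussion about passing to the closure is vacuous.)

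The missing idea is much more elementary, and it is the one fact about finite presentability you never invoke: \emph{being finitely presented is a quasi-isometry invariant} (see \cite[I.8.24]{BH}). Given a non-empty open $U\subseteq\mathcal S$, density gives a finitely presented $(G,X)\in U$; on the other hand, $U$ is a non-empty open subset of a perfect Polish space, hence uncountable by the Baire category theorem, while there are only countably many marked finitely presented groups in $\G$ --- so $U$ also contains some $(H,Y)$ with $H$ not finitely presented. Then $G\not\sim_{q.i.}H$, the hypothesis of Theorem \ref{main} is verified, and no analysis of quotients, no compactness argument, and no use of Lemma \ref{Thomas} beyond its role inside Theorem \ref{main} is needed.
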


It is worth noting that the assumption of the existence of a dense subset of finitely presented groups is essential in Corollary \ref{fp}. Indeed, there exist non-empty perfect subsets of $\mathcal G$ consisting of pairwise quasi-isometric groups, see Example \ref{pqi} below.

Corollary \ref{fp} can be used to prove quasi-isometric diversity of groups with certain interesting algebraic properties. Following the standard terminology, we say that a group-theoretic property $P$ is \emph{generic} in a subspace $\S$ of $\G$ if $P$ holds in a comeagre subset of $\S$. It is useful to keep in mind that any countable conjunction of generic properties is again generic since the class of comeagre sets in any topological space is closed under countable intersections. If $\S$ is quasi-isometrically diverse and $P$ is a generic property in $\S$, we obtain the existence of $2^{\aleph_0}$ pairwise non-quasi-isometric groups with property $P$.

One particular implementation of this idea, leading to a quasi-isometrically diverse zoo of `monsters', is considered in Section \ref{SecGen}. Let $\S=\overline{\mathcal H}_0$ be the closure of the set of all pairs $(G,X)\in \G$ such that $G$ is a non-elementary hyperbolic group without non-trivial finite normal subgroups. Generic properties in similar but slightly different classes of groups were studied by Champetier \cite{Cha}. Unfortunately, the paper \cite{Cha} does not cover some of the properties we are interested in (see the beginning of Section \ref{SecGen} for a more detailed discussion). The proposition below is a variation of Champetier's results. Similarly to \cite{Cha}, our proof is based on small cancellation theory for hyperbolic groups \cite{Gro,Ols}.

\begin{prop}\label{P1P3}
The properties of being torsion, simple, verbally complete, and being a common quotient of all non-elementary hyperbolic groups are generic in $\overline{\mathcal H}_0$.
\end{prop}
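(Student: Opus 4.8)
The plan is to establish each of the four properties as a countable intersection of generic (in fact, each being generic by itself) properties, using small cancellation theory over hyperbolic groups. The key structural fact to exploit is that $\overline{\mathcal H}_0$ is a Baire space (being closed in $\G$) and that a dense collection of its points are non-elementary hyperbolic groups without nontrivial finite normal subgroups; over each such group one can perform small cancellation quotients in the sense of Olshanskii, and the resulting quotient remains in the class $\mathcal H_0$ (it stays non-elementary hyperbolic with trivial finite radical if the relators are long and satisfy a strong enough small cancellation condition). The engine throughout is: given $(G,X)\in \mathcal H_0$ and a word $w\in F_n$ representing a nontrivial element of $G$ (or more generally any finite "task"), there is a small cancellation quotient $(\bar G,\bar X)$, arbitrarily $\G$-close to $(G,X)$, still in $\mathcal H_0$, in which the task is accomplished (e.g.\ $w$ is killed, or $w$ becomes a proper power, or $w$ becomes conjugate to a chosen generator). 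This yields density of the set of marked groups accomplishing any single task, hence — since the task is an open condition or a countable union of clopen conditions — a dense open (or dense $G_\delta$) set, and intersecting over all countably many tasks gives the comeagre set.

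**First I would handle torsion.** Enumerate the elements of $F_n$ as $w_1, w_2, \ldots$; for each $i$ and each $k\ge 2$, the set $U_{i,k}=\{(G,X)\in\overline{\mathcal H}_0 : w_i^k =_G 1$ or $w_i$ has infinite order in $G\}$ — more usefully, let $V_i = \{(G,X) : w_i$ has finite order in $G\}$, which is open (it is a union over $k$ of the clopen conditions $w_i^k=_G 1$). Density of $V_i$ follows because from any $(G,X)\in\mathcal H_0$ (a dense set of points, and it suffices to check density on a dense set) one can pass to the small cancellation quotient $G/\langle\langle w_i^N\rangle\rangle$ for large $N$, which lies in $\mathcal H_0$ and is $\G$-close to $G$. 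Then $\bigcap_i V_i$ is comeagre and every group in it is torsion. Simplicity is similar but requires killing, for every pair $(w_i,w_j)$ with $w_i\neq_G 1$, the normal closure relation: let $S_{i} = \{(G,X) : w_i =_G 1$ or $\langle\langle w_i\rangle\rangle_G = G\}$; the second alternative is the clopen condition that all generators lie in $\langle\langle w_i\rangle\rangle$, which is checkable, so $S_i$ is open, and density comes from the fact that over a non-elementary hyperbolic $G\in\mathcal H_0$ and a nontrivial $w_i$, one can kill enough conjugates of $w_i$ by a small cancellation argument to force the quotient to be generated by $w_i$ while staying in $\mathcal H_0$ — this is the standard route to common quotients. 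Indeed the property "common quotient of all non-elementary hyperbolic groups" is handled by the same mechanism: fix a countable family $\{H_m\}$ that is "sufficiently rich" (e.g.\ all $H_m\in\mathcal H_0$ with a marking, or a family such that every non-elementary hyperbolic group is a quotient of some $H_m$ — here one uses that every non-elementary hyperbolic group has a non-elementary hyperbolic quotient with trivial finite radical, or works with a universal-type construction), and make, generically, each $H_m$ surject onto $G$; again each such condition is open and dense by a small cancellation quotient over $\mathcal H_0$. Verbal completeness asks that for every word $v\in F_k$ which is not a proper power in $F_k$ and every $g\in G$, the equation $v(x_1,\ldots,x_k)=g$ has a solution; enumerating pairs (word $v_i$, target $w_j$), the condition "$v_i(x_1,\ldots,x_k)=w_j$ has a solution in $G$" is a countable union of clopen conditions (over all candidate tuples of words for the $x$'s), hence open, and density follows by adjoining a solution via small cancellation: over $G\in\mathcal H_0$, form $G*F_k / \langle\langle v_i(\cdot)w_j^{-1}, \text{relators making }G*F_k\text{ small-cancellation-collapse back onto a group in }\mathcal H_0\rangle\rangle$ — the standard way to make hyperbolic groups verbally complete, as in Olshanskii–style constructions.

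**The hard part will be** verifying two points rigorously. First, that each of these small cancellation quotients can be chosen to stay inside $\mathcal H_0$ — i.e., that the quotient is again non-elementary hyperbolic with no nontrivial finite normal subgroup. This is exactly where the machinery of \cite{Gro,Ols} is used: one needs the relators long enough (relative to the hyperbolicity constant and the geometry of the conjugacy classes involved) and in sufficiently "general position", and one must check the triviality of the finite radical — typically by arranging that the quotient still contains a non-elementary free (or malnormal hyperbolically embedded) subgroup, or by a direct argument that any finite normal subgroup would have to be trivial because it would centralize a loxodromic element. Second, that the relevant conditions really are open (or $G_\delta$) in $\overline{\mathcal H}_0$ and that density on the dense subset $\mathcal H_0$ implies density in $\overline{\mathcal H}_0$ — the latter is immediate since $\mathcal H_0$ is dense, but one must be careful that "having infinite order / finite order / being trivial / normal closure is everything / an equation has a solution" are genuinely expressible via the clopen basis of $\G_n$ (they are: each is a countable boolean combination, in the right logical shape, of the basic clopen sets $\{N : w\in N\}$ and $\{N : w\notin N\}$). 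Once openness and density of each member of a countable family are established, genericity of the (countable) conjunction is automatic from the Baire property, and I would close by remarking that Champetier's results \cite{Cha} give a template for the torsion and simplicity parts, with the verbal completeness and common-quotient parts requiring the additional small cancellation constructions sketched above.
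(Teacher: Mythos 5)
Your two-part architecture --- each property is a countable intersection of open subsets of $\overline{\mathcal H}_0$, and each of those open sets is dense because a suitable small cancellation quotient accomplishes one ``task'' at a time --- is sound and matches the paper's skeleton. Your openness/$G_\delta$ arguments are essentially the paper's $\Pi_2$-definability argument (Proposition \ref{Gd}) written out directly in terms of the clopen basis of $\G_n$; the only slip is calling conditions like ``$\ll w\rr=G$'' clopen when they are countable unions of clopen sets, hence open, which is all you need.

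Where you genuinely diverge is in how density is established. You propose a task-by-task construction: kill $w^N$ for each word $w$, force $\ll w\rr=G$ for each nontrivial $w$, adjoin a solution of each equation over $G\ast F_k$, surject each hyperbolic $H_m$ --- each time verifying that the quotient stays in $\mathcal H_0$ and is $r$-locally isomorphic to the starting group. The paper instead proves a uniform reduction (Corollary \ref{dense}): density of $P(\overline{\mathcal H}_0)$ follows once every $G\in\mathcal H_0$ has \emph{some} quotient in $P(\overline{\mathcal H}_0)$. The mechanism (Lemma \ref{nc}, via Olshanskii's Theorem \ref{thm:Ols-1}) is to first replace $G$ by a quotient $Q\in\mathcal H_0$, $r$-locally isomorphic to $G$, in which every nontrivial element of the $2r$-ball normally generates; then \emph{any} nontrivial further quotient of $Q$ is automatically injective on the $r$-ball and hence close to $G$. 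This lets the paper quote \cite{Min} (a simple common quotient of all non-elementary hyperbolic groups) and \cite{MO} (a torsion verbally complete quotient) as black boxes, only observing that those quotients are direct limits of $\mathcal H_0$-groups and so lie in $\overline{\mathcal H}_0$. Your route is self-contained for simplicity and for $QHyp$, which really do follow from the surjectivity form of Olshanskii's theorem alone; but for torsion and especially verbal completeness the ``hard part'' you defer is the actual content. Killing $w^N$ while preserving non-elementarity and triviality of the finite radical needs a version of the small cancellation theorem for imposing relations $w^N=1$, not the surjectivity statement you quote; and adjoining a solution of $v(\vec x)=g$ requires collapsing $G\ast F_k$ back onto a group generated by the image of $X$ while remaining in $\mathcal H_0$ --- that is precisely the inductive step of \cite{MO}, so at that point you would be re-proving rather than citing the result. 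Neither point is an error, but the paper's reduction exists exactly so that these verifications never have to be executed.
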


Recall that a group $G$ is \emph{verbally complete} if for every element $g \in G$ and each non-trivial reduced word $v(x_1,\dots,x_k)$, over an infinite alphabet $\{x_1, x_1^{-1}, x_2, x_2^{-1}, \ldots\}$, the equation $v(x_1,\dots,x_k)=g$  has a solution in $G$. In particular, every verbally complete group is \emph{divisible}; that is, the equation $x^n=g$ has a solution in $G$ for all $g\in G$ and $n\in \mathbb N$. Rational numbers provide the obvious example of a divisible group. For a long time, the existence of finitely generated divisible groups was an open question; the affirmative answer was given by Guba in \cite{Guba}.

Infinite common quotients of all non-elementary hyperbolic groups have property (T) of Kazhdan, and hence they are non-amenable (but not
uniformly non-amenable, see \cite{Osi}). In particular, generic groups in $\overline{\mathcal H}_0$ are examples of non-amenable groups without non-cyclic free subgroups. The existence of such groups was another famous open question, commonly referred to as the Day--von Neumann problem; it was solved by Olshanskii in \cite{Ols80}. Common quotients of all non-elementary hyperbolic groups also have strong fixed point properties for actions on finite-dimensional contractible Hausdorff spaces \cite{ABJLMS} as well as on $L^p$-spaces \cite{DM,NS}.

The set $\overline{\mathcal H}_0$ is known to be perfect (see, for example, \cite[Theorem 2.9]{Osi21}). Since every hyperbolic group is finitely presented, Corollary \ref{fp} can be applied to $\S=\overline{\mathcal H}_0$. Combining Corollary~\ref{fp} with
Proposition~\ref{P1P3}, we obtain the following.

\begin{cor}\label{h}
There exist $2^{\aleph_0}$  pairwise non-quasi-isometric  finitely generated groups which simultaneously satisfy all properties listed in Proposition \ref{P1P3}.
\end{cor}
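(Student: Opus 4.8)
The plan is to feed Proposition~\ref{P1P3} directly into Corollary~\ref{fp}. First I would verify that Corollary~\ref{fp} is applicable to $\S=\overline{\mathcal H}_0$. By definition $\overline{\mathcal H}_0$ is the closure in $\G$ of the set $\mathcal H_0$ of pairs $(G,X)$ with $G$ a non-elementary hyperbolic group having no non-trivial finite normal subgroup; hence $\mathcal H_0$ is dense in $\overline{\mathcal H}_0$, and since hyperbolic groups are finitely presented, $\overline{\mathcal H}_0$ contains a dense subset of finitely presented groups. Combined with the fact, recalled in the text (see \cite[Theorem 2.9]{Osi21}), that $\overline{\mathcal H}_0$ is perfect, Corollary~\ref{fp} gives that $\overline{\mathcal H}_0$ is quasi-isometrically diverse.

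Next I would assemble the generic properties. By Proposition~\ref{P1P3}, each of the four properties --- being torsion, being simple, being verbally complete, and being a common quotient of all non-elementary hyperbolic groups --- is generic in $\overline{\mathcal H}_0$, i.e.\ holds on a comeagre subset of $\overline{\mathcal H}_0$. Since the class of comeagre subsets of any topological space is closed under countable (in particular, finite) intersections, the set $\mathcal C\subseteq\overline{\mathcal H}_0$ of those marked groups satisfying all four properties simultaneously is again comeagre in $\overline{\mathcal H}_0$.

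Finally, applying the definition of quasi-isometric diversity to the comeagre set $\mathcal C$, we conclude that $\mathcal C$ contains $2^{\aleph_0}$ quasi-isometry classes. Selecting one marked group from each of these classes yields $2^{\aleph_0}$ pairwise non-quasi-isometric finitely generated groups, each of which is torsion, simple, verbally complete, and a common quotient of all non-elementary hyperbolic groups, which is exactly the assertion.

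I expect no genuine obstacle here, as the argument is a purely formal combination of results already established: the only points needing (routine) justification are the density of finitely presented groups in $\overline{\mathcal H}_0$, immediate from the definition of $\overline{\mathcal H}_0$ as the closure of a family of hyperbolic groups, and the stability of comeagreness under finite intersections, which is standard. All the substantive content lives in Proposition~\ref{P1P3} (via small cancellation theory over hyperbolic groups) and in Theorem~\ref{main}/Corollary~\ref{fp}.
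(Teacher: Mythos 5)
Your proposal is correct and takes essentially the same route as the paper: both verify that $\overline{\mathcal H}_0$ is perfect with a dense subset of finitely presented (hyperbolic) groups so that Corollary~\ref{fp} yields quasi-isometric diversity, then intersect the four comeagre sets from Proposition~\ref{P1P3} and apply the definition of quasi-isometric diversity to that comeagre intersection. No gaps.
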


In particular,  Corollary \ref{h} provides the first example of an uncountable family of quasi-isometry classes of finitely generated simple groups.
The same approach can be used to construct uncountable sets of pairwise non-quasi-isometric finitely generated groups with other interesting algebraic properties, e.g., torsion-free Tarski Monsters and groups with $2$ conjugacy classes. We briefly discuss these applications at the end of Section~\ref{SecGen}.

Our next result clarifies the relation between quasi-isometry and an equivalence relation studied in model theory. Recall that two groups are \emph{elementary equivalent} is they satisfy the same first-order sentences. In some classes of groups, elementary equivalence is closely related to quasi-isometry. For example, the former implies the latter for polycyclic groups \cite{Raph}.  We show that, in general, elementary equivalence does not impose any restrictions on quasi-isometric diversity. The proof is a simple combination of Corollary \ref{fp} and a recent result of the second author
\cite[Theorem~2.9]{Osi21} stating that $\overline{\mathcal H}_0$ contains a comeagre elementary equivalence class.

\begin{cor}\label{AE-Th}
There exist $2^{\aleph_0}$ finitely generated, pairwise non-quasi-isometric, elementarily equivalent groups.
\end{cor}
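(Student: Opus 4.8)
The plan is to combine Corollary~\ref{fp} with the cited theorem of the second author on the existence of a comeagre elementary equivalence class. First I would check that $\mathcal S=\overline{\mathcal H}_0$ satisfies the hypotheses of Corollary~\ref{fp}. It is non-empty and, as recalled in the excerpt, perfect (see \cite[Theorem~2.9]{Osi21}). By definition $\overline{\mathcal H}_0$ is the closure of the set $\mathcal H_0$ of all $(G,X)\in\G$ with $G$ a non-elementary hyperbolic group having no non-trivial finite normal subgroup; since every hyperbolic group is finitely presented, $\mathcal H_0$ is a dense subset of $\overline{\mathcal H}_0$ consisting of finitely presented groups. Hence Corollary~\ref{fp} applies, and $\overline{\mathcal H}_0$ is quasi-isometrically diverse.

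Next I would invoke \cite[Theorem~2.9]{Osi21}, which provides a comeagre subset $\mathcal E\subseteq\overline{\mathcal H}_0$ forming a single elementary equivalence class; that is, the underlying groups of any two marked groups lying in $\mathcal E$ are elementarily equivalent. Because $\overline{\mathcal H}_0$ is quasi-isometrically diverse and $\mathcal E$ is comeagre in it, the very definition of quasi-isometric diversity yields that $\mathcal E$ meets $2^{\aleph_0}$ quasi-isometry classes; all the groups involved are finitely generated, since every element of $\G$ is.

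Finally, picking one representative from each of these quasi-isometry classes contained in $\mathcal E$ yields a family of $2^{\aleph_0}$ finitely generated groups that are pairwise non-quasi-isometric and, being members of $\mathcal E$, pairwise elementarily equivalent (here one uses that both the quasi-isometry type and the elementary theory of a finitely generated group are independent of the chosen finite generating set). There is essentially no obstacle once the two substantive ingredients, Corollary~\ref{fp} and \cite[Theorem~2.9]{Osi21}, are in hand: the only point requiring (minor) care is the trivial bookkeeping that turns ``$2^{\aleph_0}$ quasi-isometry classes inside $\mathcal E$'' into an explicit family of representatives, together with the observation that a comeagre subset of a quasi-isometrically diverse space is again large in precisely the sense the definition demands.
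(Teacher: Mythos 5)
Your proposal is correct and follows exactly the argument the paper intends: apply Corollary~\ref{fp} to the perfect set $\overline{\mathcal H}_0$ (with $\mathcal H_0$ as the dense set of finitely presented groups) to get quasi-isometric diversity, then intersect with the comeagre elementary equivalence class provided by \cite[Theorem~2.9]{Osi21}. No gaps.
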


In the settings of Corollary \ref{fp},  generic groups in $\mathcal S$ will have `sparse' sets of relations. However, Theorem~\ref{main} can also be used to produce examples of non-quasi-isometric groups of completely different type, namely densely related groups in the sense of \cite{BC}. To illustrate this, we recover some of the results of Cornulier--Tessera \cite{CT} and Brieussel--Zheng \cite{BZ}.

\begin{cor}\label{cbm}
There exist $2^{\aleph_0}$ pairwise non-quasi-isometric finitely generated groups $G$ splitting as
$$
\{1\}\to {\Z_2}^\infty \to G\to \mathbb Z_2 {\,\rm wr\,}\mathbb Z \to \{1\},
$$
where ${\Z_2}^\infty$ is central in $G$. In particular,
\begin{enumerate}
\item[(a)] there exist continuously many quasi-isometry classes of finitely generated groups of asymptotic dimension $1$;
\item[(b)] there exist continuously many quasi-isometry classes of finitely generated center-by-metabelian groups (i.e., groups $G$ satisfying the identity $[G^{\prime\prime}, G]=1$).
\end{enumerate}
\end{cor}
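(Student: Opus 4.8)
The plan is to apply Theorem~\ref{main} to a compact subspace of $\G_2$ cut out by the family of central extensions of the lamplighter group used by Cornulier--Tessera~\cite{CT} and Brieussel--Zheng~\cite{BZ}. Write $\Gamma=\mathbb Z_2\wre\mathbb Z$. Because the Schur multiplier $H_2(\Gamma;\mathbb Z_2)$ is infinite-dimensional, there is a Cantor space $K$ of ``defining data'' together with, for each $\omega\in K$, a $2$-generated group $G_\omega$ and a distinguished marking $X_\omega\in G_\omega^2$ (the lifts of the two standard generators of $\Gamma$) fitting into
$$\{1\}\to {\Z_2}^\infty\to G_\omega\to\Gamma\to\{1\}$$
with ${\Z_2}^\infty$ central, and with two properties. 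First, $\omega\mapsto(G_\omega,X_\omega)$ is an injective continuous map $K\to\G_2$: if $\omega$ and $\omega'$ agree on sufficiently many coordinates, then the defining relators of $G_\omega$ and $G_{\omega'}$ coincide on a large ball of $F_2$, so the marked groups are close in $\G$. Second --- and this is the real geometric content taken from \cite{CT,BZ} --- there is a quasi-isometry invariant (for instance the exponent of the $\ell^p$-isoperimetric, equivalently F\o lner, profile, or the return-probability exponent of the simple random walk) whose value on $G_\omega$ is \emph{not locally constant} in $\omega$: every non-empty open subset of $K$ contains parameters $\omega,\omega'$ with $G_\omega$ and $G_{\omega'}$ non-quasi-isometric.

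Let $\mathcal S\subseteq\G_2$ be the image of $K$; as the continuous image of a compact space it is a non-empty compact, hence closed, subset of $\G$, and by injectivity it is homeomorphic to $K$. I claim $\mathcal S$ satisfies the hypothesis of Theorem~\ref{main}: a non-empty open $U\subseteq\mathcal S$ corresponds to a non-empty open subset of $K$, which by the second property above contains two parameters whose groups are non-quasi-isometric, so $U$ contains two non-quasi-isometric groups. Hence $\mathcal S$ is quasi-isometrically diverse. For a ``generic'' $\omega$ the kernel ${\Z_2}^\infty$ is genuinely infinite --- when $\omega$ has only finitely many ``active'' coordinates it degenerates to a finite group --- and the set of such $\omega$ is a dense $G_\delta$ in $K$, so the corresponding subset $\mathcal S_0\subseteq\mathcal S$ is comeagre and consists of groups of the stated form. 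Since $\mathcal S$ is quasi-isometrically diverse, $\mathcal S_0$ has $2^{\aleph_0}$ quasi-isometry classes; picking one group in each class gives the required $2^{\aleph_0}$ pairwise non-quasi-isometric finitely generated groups.

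Assertions (a) and (b) are then structural statements about any group $G$ fitting into the displayed central extension of $\Gamma=\mathbb Z_2\wre\mathbb Z$. For (b): the derived subgroup of $\Gamma$ lies inside the abelian base group $\bigoplus_{\mathbb Z}\mathbb Z_2$, so $\Gamma''=\{1\}$; applying this to the quotient map $G\to\Gamma$ gives $G''\subseteq{\Z_2}^\infty$, and since ${\Z_2}^\infty$ is central we get $[G'',G]\subseteq[{\Z_2}^\infty,G]=\{1\}$, i.e.\ $G$ is center-by-metabelian. For (a): $\Gamma$ has asymptotic dimension $1$ (it is quasi-isometric to a Diestel--Leader graph, i.e.\ a horocyclic product of two trees), and an extension with locally finite kernel does not change the asymptotic dimension, so $\operatorname{asdim}G=\operatorname{asdim}\Gamma=1$ (compare \cite{CT}), the lower bound holding because $G$ is infinite.

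The one genuinely non-trivial ingredient is the second property of the family $\{G_\omega\}$ --- that perturbing $\omega$ within an arbitrarily small neighbourhood in $K$ already changes the quasi-isometry type of $G_\omega$. This is precisely the geometric heart of \cite{CT} and \cite{BZ}: bounding a quasi-isometry invariant of $G_\omega$ in terms of the tail of $\omega$ and exhibiting data with distinct tails. We simply invoke it; the rest is the soft topology of Theorem~\ref{main} together with the elementary observations above.
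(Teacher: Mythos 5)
Your topological framework is right, and your verifications of (a) and (b) are fine, but the step that carries all the weight --- checking the hypothesis of Theorem~\ref{main}, namely that \emph{every} non-empty open subset of $\mathcal S$ contains two non-quasi-isometric groups --- is not actually proved. You assert that \cite{CT} and \cite{BZ} supply a quasi-isometry invariant (isoperimetric profile, return probability, \dots) whose value is ``not locally constant in $\omega$,'' and you ``simply invoke it.'' No such statement is available off the shelf: \cite{CT} distinguish uncountably many quotients by computing the set of ultrafilters for which asymptotic cones are simply connected, \cite{BZ} work with a different family of groups altogether, and neither paper asserts local non-constancy in your parametrization. Moreover, the profile-type invariants you name cannot separate $G_\omega$ from $G_{\omega'}$ when $\omega$ and $\omega'$ differ in only finitely many coordinates, since those groups are commensurable up to finite kernels and hence quasi-isometric. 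Finally, if one is allowed to import the full invariant computation of \cite{CT}, then Theorem~\ref{main} is no longer needed --- avoiding that computation is the point of the corollary. So as written the argument is either circular or has a hole exactly where the geometry enters.

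The paper closes this hole with two moves you are missing. First (Proposition~\ref{ce}): because each $E_i\cong\Z_2$ is finite, the set $\mathcal S_I=\{(G_K,X_K) \mid |K\vartriangle I|<\infty\}$ is dense in $\mathcal S$ and consists entirely of groups quasi-isometric to $G_I$; hence if even \emph{one} pair with $G_I\not\sim_{q.i.}G_J$ exists, every non-empty open subset of $\mathcal S$ meets both dense quasi-isometry classes and Theorem~\ref{main} applies. This reduces the whole problem to distinguishing two explicit groups. Second, that single pair is separated by a rigidity argument rather than an invariant: by Eskin--Fisher--Whyte \cite{EFW}, any finitely generated group quasi-isometric to $\Z_2\wre\Z$ has a finite-index subgroup acting properly cocompactly on some $\DL(n,n)$, and by the Cornulier--Fisher--Kashyap description \cite{CFK} such a lattice cannot have infinite center; since Hall's full central extension contains ${\Z_2}^{\infty}$ in its center (as does any finite-index subgroup), it is not quasi-isometric to the lamplighter. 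I suggest restructuring your proof around this reduction-to-one-pair idea; your construction of the closed Cantor family (essentially Lemma~\ref{S}) and your arguments for (a) and (b) can then stand as they are.
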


Claim (a) also follows from the work of Brieussel and Zheng \cite{BZ} and should be compared with the well-known fact that every finitely presented group of asymptotic dimension $1$ is virtually free \cite{FW}. In particular, there are only $2$ quasi-isometry classes of finitely presented groups of asymptotic dimension $1$: virtually infinite cyclic and virtually non-cyclic free groups. Claim (b) is a slight improvement of the result of Cornulier and Tessera \cite{CT}, who constructed continuously many quasi-isometry classes of finitely generated solvable groups; in fact, their groups are (nilpotent of class $3$)-by-abelian, while our groups are (nilpotent of class $2$)-by-abelian. Note that there is no hope to prove the same result for solvable groups of derived length $2$ as finitely generated metabelian groups satisfy the maximum condition for normal subgroups \cite{Hall} and, therefore, the set of such groups is countable.

\section{Preliminaries}
Let $G$ be a group, and let $X=(x_1,\dots,x_n) \in G^n$ be an ordered $n$-tuple of elements of $G$, for some $n \in \N$.
We will say that $X$ is a generating set of $G$ if $x_1,\dots,x_n$ generate $G$ in the usual sense. If $H$ is another group and
$\phi\colon G \to H$ is an epimorphism, the generating tuple (set) $X=(x_1,\dots,x_n)$ of $G$ gives rise to a generating tuple (set) $\phi(X)=(\phi(x_1),\dots,\phi(x_n))$ of $H$.

Given a group $G$ generated by a set $X$, we denote by $|\cdot |_X $ the word length with respect to $X$ and by $\d_X$ the corresponding word metric on $G$; by $\Gamma (G,X)$ we denote the Cayley graph of $G$ with respect to $X$. The definition of the space of finitely generated marked groups given below is slightly different from (but obviously equivalent to) the one given in the Introduction.

For a group $G$, we denote by $\mathcal N(G)$ the set of all normal subgroups of $G$ endowed with the topology induced by the product topology on $2^{G}$. Recall that the product topology on $2^G$ is simply the topology of pointwise convergence of indicator functions. So, an open neighborhood of $N \in \mathcal{N}(G)$ consists of all subgroups $M \lhd G$ such that $N \cap P=M \cap P$, for some finite subset $P \subseteq G$.

Given any $n\in \mathbb N$, let $F_n$ denote the free group with a fixed basis $\{ a_1,\ldots, a_n\}$. Every pair $(G,X)$, where $G$ is a group generated by an ordered tuple $X=(x_1,\dots,x_n)$, corresponds to a normal subgroup of $F_n$, namely the kernel of the homomorphism $\e_{G,X}\colon F_n\to G$ sending $a_i$ to $x_i$ for all $i$. We say that two such pairs $(G,X)$ and $(H, Y)$, where $|X|=|Y|=n$, are \emph{equivalent} if $Ker(\e_{G,X}) = Ker(\e_{H,Y})$.
\begin{ex}
Let $n=2$ and let $G\cong \mathbb Z\oplus\mathbb Z$. Then for any generating pairs $X=(x_1,x_2)$ and $Y=(y_1,y_2)$ of $G$, the pairs $(G,X)$ and $(G,Y)$ are equivalent since $\e_{G,X}=\e_{G,Y}=[F_2, F_2]$.
\end{ex}

For each $n \in \N$ we denote by $\mathcal G_n$ the set of equivalence classes of pairs $(G,X)$, where $G$ is a group and $X$ is a generating $n$-tuple of $G$, and endow it with the pull-back of the product topology on $\mathcal N (F_n)$ via the (bijective) map $(G,X)\mapsto Ker(\e_{G,X})$. For simplicity, we keep the notation $(G,X)$ for the equivalence class of $(G,X)$. The map  $(G,(x_1, \ldots, x_n))\mapsto (G,(x_1, \ldots, x_n, 1))$ yields a topological embedding $\G_n \to \G_{n+1}$. Note that under this embedding the image of $\G_n$ is both closed and open in $\G_{n+1}$. 
The topological union  $\G=\bigcup_{n\in \mathbb N}\G_n$ is called the \emph{space of finitely generated marked groups}. Thus a subset $U$ of $\G$ is open  if and only if 
$U =\bigcup_{n \in \N} U_n$, where $U_n$ is an open subset of $\G_n$ (here we identify $\G_n$ with its image in $\G$). It follows that a sequence $(x_k)_{k \in \N}$ 
converges to $x \in \G_n \subset\G$ if and only if there is $K \in \N$ such that $x_k \in \G_n$ for all $k \ge K$ and the tail sequence $(x_k)_{k \ge K}$ converges to $x$ in $\G_n$.

The topology on $\G$ can be visualized in terms of Cayley graphs as follows. For a group $G$, generated by a set $X$, and for any $r>0$, we define the \emph{ball of radius $r$ centered at the identity element} by
$$
B_{G,X}(r)=\{ g\in G \mid |g|_X\le r\}.
$$

Let $(G,X)\in \G_m$ and $(H,Y)\in \G_n$, where $X=(x_1, \ldots, x_m)$ and $Y=(y_1, \ldots, y_n)$. We will say that  $(G,X)$ and $(H,Y)$ are \emph{$r$-locally isomorphic}, for some $r\in \mathbb N$, if $m=n$ and there exists an isomorphism between the induced subgraphs of the Cayley graphs $\Gamma (G,X)$ and $\Gamma (H,Y)$ with the vertex sets $B_{G,X}(r)$ and $B_{H,Y}(r)$, respectively, that maps the identity element of $G$ to the identity element of $H$ and edges labelled by $x_i$ to edges labelled by $y_i$, for all $i=1, \ldots, n$. It is easy to see that $r$-local isomorphism is a well-defined equivalence relation on $\G$. A sequence of marked  groups $(G_i, X_i)$ converges to $(G,X)$ in $\mathcal G$ if for every $r\in \mathbb N$,  $(G_i, X_i)$ and $(G,X)$ are $r$-locally isomorphic for all large enough $i \in \N$.

We now record a useful observation.

\begin{lem}\label{NG}
Let $G$ be a group generated by a finite set $X$. Given a normal subgroup $N\lhd G$, we let $X_N$ denote the image of $X$ under the natural homomorphism $G\to G/N$. The map $\mathcal N(G)\to \G$ defined by the formula $N\mapsto (G/N, X_N)$ for all $N\in \mathcal N(G)$ is continuous.
\end{lem}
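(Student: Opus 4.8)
The plan is to factor the map through $\mathcal N(F_n)$ and reduce everything to the elementary fact that taking preimages under a fixed homomorphism is continuous for the product topology. Fix the generating tuple $X=(x_1,\dots,x_n)$ and let $\e=\e_{G,X}\colon F_n\to G$ be the associated epimorphism. The first step is to identify the point of $\G$ represented by $(G/N,X_N)$: since the composite $F_n\xrightarrow{\e}G\to G/N$ sends $a_i$ to $x_iN$, it is exactly the homomorphism $\e_{G/N,X_N}$, whence $Ker(\e_{G/N,X_N})=\e^{-1}(N)$. As $\e^{-1}(N)$ is a normal subgroup of $F_n$, the map of the lemma factors as
$$
\mathcal N(G)\xrightarrow{\ \phi\ }\mathcal N(F_n)\hookrightarrow\G,\qquad \phi(N)=\e^{-1}(N),
$$
where the second arrow is the canonical embedding of $\G_n=\mathcal N(F_n)$ into $\G$.

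The second step just records the fact, already noted in the construction of $\G$, that $\G_n$ embeds into $\G$ as a clopen subspace, so the second arrow above is continuous. It therefore remains to prove that $\phi$ is continuous.

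For the third and only substantive step I invoke the defining property of the product topology on $2^{F_n}=\prod_{w\in F_n}\{0,1\}$: $\phi$ is continuous provided that for every $w\in F_n$ the corresponding coordinate function $N\mapsto \bigl[\,w\in\phi(N)\,\bigr]$ is continuous on $\mathcal N(G)$. But $w\in\e^{-1}(N)$ holds if and only if $\e(w)\in N$, so this coordinate function is nothing but the restriction to $\mathcal N(G)$ of the $\e(w)$-coordinate projection $2^G\to\{0,1\}$, which is continuous by the very definition of the product topology on $2^G$. Hence $\phi$ is continuous, and composing with the embedding $\mathcal N(F_n)\hookrightarrow\G$ finishes the proof. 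I do not expect a genuine obstacle here; the only point requiring care is the bookkeeping in the first step — checking that $(G/N,X_N)$ corresponds to the specific normal subgroup $\e^{-1}(N)$ of $F_n$ with respect to the fixed basis $a_1,\dots,a_n$, rather than merely to an abstractly isomorphic marked group — since it is precisely this canonical identification that turns $\phi$ into a coordinatewise continuous map.
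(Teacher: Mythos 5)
Your proof is correct, but it takes a genuinely different route from the paper. The paper argues via sequences and the geometric characterization of the topology on $\mathcal G$: if $N_i\to N$ in $\mathcal N(G)$, then for each $r$ the intersections $N_i\cap B_{G,X}(2r)$ eventually stabilize to $N\cap B_{G,X}(2r)$, which forces $(G/N_i,X_{N_i})$ to be $r$-locally isomorphic to $(G/N,X_N)$; convergence in $\mathcal G$ then follows from the Cayley-graph description of the topology. You instead factor the map as $\mathcal N(G)\to\mathcal N(F_n)\hookrightarrow\mathcal G$ with $N\mapsto\varepsilon_{G,X}^{-1}(N)$, and observe that each $F_n$-coordinate of the first map is the restriction of a single $2^G$-coordinate projection; this is a purely formal product-topology argument that never mentions Cayley graphs. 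Your bookkeeping in the first step is the right thing to worry about and is handled correctly: $(G/N,X_N)$ does correspond to the normal subgroup $\varepsilon_{G,X}^{-1}(N)$ of $F_n$, since the marking epimorphism for the quotient is the composite $F_n\to G\to G/N$. One point worth noting in comparing the two: your argument proves continuity directly, whereas the paper's proves sequential continuity, which suffices only because $\mathcal N(G)\subseteq 2^G$ with $G$ countable is metrizable; on the other hand, the paper's local-isomorphism computation is the form of the statement that gets reused later (e.g.\ in the proof of Corollary \ref{dense}), which explains the authors' choice.
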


\begin{proof}
Suppose that a sequence $(N_i)$ converges to $N$ in $\mathcal N(G)$. Fix any $r\in \mathbb N$. For all sufficiently large $i$, we have $N_i\cap B_{G,X}(2r)= N\cap B_{G,X}(2r)$. For every such $i$, $(G/{N_i}, X_{N_i})$ is $r$-locally isomorphic to $(G/{N}, X_{N})$ and the claim follows.
\end{proof}

\begin{ex}\label{limex}
Suppose that we have a sequence of groups and epimorphisms
\begin{equation}\label{sec}
G\stackrel{\e_0}\longrightarrow G_1 \stackrel{\e_1}\longrightarrow G_2 \stackrel{\e_2}\longrightarrow\ldots .
\end{equation}
Let $X$ be a generating set of $G$, $X_i=\e_{i-1}\circ \cdots \circ \e_0(X)$ and $N_i=\ker(\e_{i-1}\circ \cdots \circ \e_0)$, $i \in \N$.
Observe that $(N_i)$ is a nested sequence of normal subgroups of $G$ converging to $N=\bigcup_{i\in \mathbb N} N_i$ in $\mathcal N(G)$. Let $G_\infty =G/N$ be the direct limit of the sequence
\eqref{sec}, and let $X_\infty$ denote the natural image of $X$ in $G_\infty$. Then $(G_i, X_i)\to (G_\infty, X_\infty)$ in $\mathcal G$ by Lemma~\ref{NG}.
\end{ex}

Let $(S, \d_S)$ and $(T, \d_T)$ be metric spaces. A map $\varphi \colon S\to T$ is called \emph{$C$-coarsely Lipschitz} for some constant $C$ if \begin{equation}\label{C-Lip}
\d_T(\varphi(x), \varphi(y)) \le C \d_S (x,y)+ C \mbox{ for all } x,y\in S.
\end{equation}
Two metric spaces $(S, \d_S)$ and $(T, \d_T)$ are said to be \emph{quasi-isometric} if there exists a constant $C$ and $C$-coarsely Lipschitz maps $\varphi\colon S\to T$ and $\psi\colon T\to S$ such that $\varphi$ and $\psi$ are \emph{$C$-coarse inverses} of each other; that is,
\begin{equation}\label{ci}
\sup\limits_{t\in T} \d_T (\varphi\circ \psi(t), t)\le C \;\;\; {\rm and }\;\;\; \sup\limits_{s\in S} \d_S (\psi\circ \varphi(s), s)\le C.
\end{equation}
The quasi-isometry relation between metric spaces is denoted by $\sim_{q.i.}$.

We now restrict to the case when $(S, \d_S)=(G, \d_X)$ and $(T, \d_T)=(H, \d_Y)$ are groups endowed with word metrics with respect to some generating sets $X$ and $Y$. Two marked groups $(G, X)$ and $(H, Y)$ are \emph{quasi-isometric} if so are the metric spaces $(G, \d_X)$ and $(H, \d_Y)$. Further, we say that marked groups $(G, X)$ and $(H, Y)$ are \emph{$C$-quasi-isometric} for some constant $C>0$ and write  $(G, X)\Cqi (H, Y)$ if there exist maps $\varphi$, $\psi$ as above satisfying (\ref{C-Lip}), (\ref{ci}), and the additional property
\begin{equation}\label{pt}
\varphi(1)=1 \;\;\; {\rm and}\;\;\; \psi(1)=1;
\end{equation}
in what follows, we refer to $\varphi$, $\psi$ as a \emph{$C$-quasi-isometry witnessing pair} for $(G, X)$ and $(H, Y)$. Note that, unlike $\sim_{q.i.}$, the relation $\Cqi$ is not transitive.

If $(G,X)\sim_{q.i.} (H,Y)$, we can always adjust the corresponding maps $\varphi$ and $\psi$ satisfying (\ref{C-Lip}) and (\ref{ci}) by redefining them at the identity elements so that they enjoy the additional property (\ref{pt}) and are still $C^\prime$-coarsely Lipschitz and $C^\prime$-coarsely mutually inverse for some (possibly larger) constant $C^\prime$. Thus, for the binary relations $\sim_{q.i.}$ and $\Cqi$ on $\mathcal G$, considered as subsets of $\mathcal G\times \mathcal G$, we have
\begin{equation}\label{u}
\sim _{q.i.}=\bigcup\limits_{C\in \mathbb N} \Cqi.
\end{equation}

We will need the following lemma, which can be extracted from the proof of Theorem 3.2 in \cite{T}. Since this result is crucial for our paper, we provide a brief proof for convenience of the reader. Our proof is slightly different from the one suggested in \cite{T}.

\begin{lem}[Thomas]\label{Thomas}
For any $C\in \mathbb N$, the relation $\Cqi$ is closed in $\mathcal G \times \mathcal G$.
\end{lem}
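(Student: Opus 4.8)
The plan is to show that the complement of $\Cqi$ is open in $\G\times\G$; equivalently, if a sequence $\left((G_i,X_i),(H_i,Y_i)\right)$ of pairs in $\Cqi$ converges to $\left((G,X),(H,Y)\right)$, then $(G,X)\Cqi(H,Y)$. First I would reduce to the case where all the marked groups in each of the two coordinate sequences lie in a fixed $\G_m$ and $\G_n$ respectively: by the description of convergence in $\G$ recalled in the preliminaries, this holds for all large $i$, and we may pass to that tail. For each such $i$ fix a $C$-quasi-isometry witnessing pair $\varphi_i\colon G_i\to H_i$, $\psi_i\colon H_i\to G_i$; note these satisfy \eqref{C-Lip}, \eqref{ci}, and the basepoint condition \eqref{pt}.

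The key observation is a \emph{boundedness} phenomenon coming from \eqref{C-Lip} together with \eqref{pt}: for every $g\in G_i$ we have $|\varphi_i(g)|_{Y_i}\le C|g|_{X_i}+C$, and likewise for $\psi_i$. Hence, for any fixed radius $r$, the map $\varphi_i$ sends $B_{G_i,X_i}(r)$ into $B_{H_i,Y_i}(Cr+C)$, and $\psi_i$ sends $B_{H_i,Y_i}(r)$ into $B_{G_i,X_i}(Cr+C)$. Since $(G_i,X_i)\to(G,X)$, for $i$ large the ball $B_{G_i,X_i}(R)$ is canonically identified (as a labelled graph, pointed at the identity) with $B_{G,X}(R)$ for any prescribed $R$, and similarly on the $H$ side. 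Thus, after these identifications, each $\varphi_i$ restricted to $B_{G,X}(r)$ becomes a basepoint-preserving function into the \emph{finite} set $B_{H,Y}(Cr+C)$, and $\psi_i$ restricted to $B_{H,Y}(r)$ becomes a function into the finite set $B_{G,X}(Cr+C)$. The standard diagonal/compactness argument then applies: there are only finitely many such functions on each finite ball, so by a diagonal extraction over $r=1,2,3,\dots$ we may pass to a subsequence of $i$ along which $\varphi_i$ stabilizes on every ball to a well-defined limit map $\varphi\colon G\to H$, and $\psi_i$ stabilizes to $\psi\colon H\to G$.

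It remains to check that $\varphi,\psi$ witness $(G,X)\Cqi(H,Y)$. Each of \eqref{C-Lip}, \eqref{ci}, \eqref{pt} is a condition that, for a given pair of points, only involves word lengths of those points and of their images under $\varphi$, $\psi$ (and compositions thereof) up to a bounded radius; all of these are determined by the behaviour of $\varphi_i,\psi_i$ on sufficiently large balls for large $i$, where they agree with $\varphi,\psi$ and where the Cayley-graph identifications are isometric on the relevant scale. Concretely: given $x,y\in G$ with $|x|_X,|y|_X\le s$, pick $i$ large enough that $B_{G,X}(s)$ is identified with $B_{G_i,X_i}(s)$ and $B_{H,Y}(Cs+C)$ with $B_{H_i,Y_i}(Cs+C)$ and that $\varphi_i=\varphi$ on $B_{G,X}(s)$; then $\d_Y(\varphi(x),\varphi(y))=\d_{Y_i}(\varphi_i(x),\varphi_i(y))\le C\,\d_{X_i}(x,y)+C=C\,\d_X(x,y)+C$, giving \eqref{C-Lip}; the coarse-inverse inequalities \eqref{ci} and the basepoint condition \eqref{pt} are verified in the same way, taking $i$ large relative to the point and its (doubly) bounded iterated images. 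Since $\sup$ over all points of a quantity bounded by $C$ is still bounded by $C$, the suprema in \eqref{ci} survive the passage to the limit.

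The main obstacle I anticipate is purely bookkeeping: being careful that the constant $C$ does \emph{not} degrade in the limit — which works precisely because $C$ is held fixed along the sequence (this is why Thomas's statement is about $\Cqi$ for a fixed $C$ rather than about $\sim_{q.i.}$, which by \eqref{u} is only an $F_\sigma$ relation) — and that the radii chosen on the two sides are large enough to accommodate the $C$-distortion, the $C$-coarse-surjectivity defect, and one further composition. None of this requires any nontrivial computation; it is the same compactness-of-finite-balls idea underlying the topology on $\G$ itself.
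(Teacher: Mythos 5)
Your proposal is correct and follows essentially the same route as the paper: approximate the limit pair by $C$-quasi-isometric marked groups, transport the witnessing maps to restrictions on balls $B_{G,X}(M)\to B_{H,Y}(CM+C)$ via the local isomorphisms, extract a pointwise-convergent (sub)sequence using the finiteness of each target ball, and verify that the limit maps form a $C$-quasi-isometry witnessing pair. The only cosmetic difference is that you organize the extraction as a diagonal argument over radii while the paper phrases it as pointwise convergence of the maps $\varphi_M$, which amounts to the same compactness argument.
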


\begin{proof}
Let $(G,X)$ and $(H,Y)$ be two marked groups such that $((G,X), (H,Y))$ is contained in the closure of $\Cqi$ in $\mathcal G \times \mathcal G$. Approximating $(G,X)$ and $(H,Y)$ by $C$-quasi-isometric marked groups, for every $M\in \mathbb N$, we can find maps $\varphi_M\colon B_{G,X} (M)\to H$ and $\psi_M \colon B_{H,Y} (M)\to G$ such that

\begin{enumerate}[itemsep=10pt] %can also include topsep=10pt...
\item[(a)] $\varphi_M(1)=1$ and $\psi_M(1)=1$;
\item[(b)] for every $g_1, g_2\in B_{G,X}(M)$, we have $\d_Y(\varphi_M(g_1), \varphi_M(g_2))\le C\d_X(g_1, g_2) +C $; similarly for $\psi_M$;
\item[(c)] we have $\d_X(\psi_M\circ \varphi_M (g), g)\le C$ whenever $\psi_M\circ \varphi_M (g)$  is well-defined; similarly for  $\varphi_M\circ \psi_M$.
\end{enumerate}

By (a) and (b), we have $\varphi _M (g)\in B_{H,Y}(C|g|_X+C)$ for all $g\in G$ and $M\ge |g|_X$. Thus, for every fixed $g\in G$, there are only finitely many possibilities for $\varphi_M (g)$ over all $M\ge |g|_X$. It follows that there exists a subsequence $( \varphi_{M_i})_{i\in \mathbb N}$ which converges pointwise to a map $\varphi\colon G\to H$; that is,
for each $g \in G$ we have $\varphi(g)=\varphi_{M_i}(g)$ for all but finitely many indices $i\in \mathbb N$. By the same argument applied to the maps $\psi_{M_i}$, we can assume, without loss of generality, that the sequence $( \psi_{M_i})_{i\in \mathbb N}$ also converges pointwise to some map $\psi\colon H\to G$.
Obviously $\varphi $ and $\psi$ satisfy all necessary conditions to be a $C$-quasi-isometry witnessing pair for $(G, X)$ and $(H, Y)$.
\end{proof}

\section{Proof of the main theorem}\label{sec:3}
Note that for each $n \in \N$, $\G_n$ is a compact metrizable open subspace of $\G=\bigcup_{n \in \N} \G_n$. Therefore the space $\G$ is locally compact and second countable, hence it is 
a Polish space (cf. \cite[Theorem I.5.3]{Kech}).

Recall that a subset of a topological space is \emph{perfect} if  it is closed and has no isolated points. It is well-known that every non-empty perfect subset of a Polish space has cardinality $2^{\aleph_0}$ (see \cite[Corollary~I.6.3]{Kech}). To prove our main theorem we will also need a classical result of Mycielski \cite{Myc}.

\begin{thm}[Mycielski]
Let $\mathcal E$ be an equivalence relation on a non-empty Polish space $\mathcal{S}$. If $\mathcal E$ is a meagre subset of $\mathcal{S}\times \mathcal{S}$, then there exists a non-empty perfect subset $\mathcal{P}\subseteq \mathcal{S}$ such that $(x,y)\notin \mathcal E$ for any two distinct $x,y\in \mathcal{P}$.
\end{thm}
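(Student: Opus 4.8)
The plan is to prove this classical fact by the Cantor-scheme (``fusion'') argument: inside $\mathcal S$ one builds a topological copy of the Cantor space $\{0,1\}^{\N}$, any two distinct points of which are $\mathcal E$-inequivalent.

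First I would dispose of any case distinction by noting that $\mathcal S$ is automatically perfect. Since $\mathcal E$ is meagre, it is contained in a countable union of closed nowhere dense subsets of $\mathcal S\times\mathcal S$; as $\mathcal E$ contains the diagonal, no $(x,x)$ can lie in the interior of any such set, so no $(x,x)$ is isolated in $\mathcal S\times\mathcal S$, whence $\mathcal S$ has no isolated points and is therefore perfect (in particular uncountable). Next, fix a complete metric $\rho$ on $\mathcal S$ compatible with its topology, and fix a decreasing sequence $(U_n)_{n\in\N}$ of dense open subsets of $\mathcal S\times\mathcal S$ with $\bigcap_n U_n\subseteq(\mathcal S\times\mathcal S)\setminus\mathcal E$ (take $U_n$ to be the complement of the union of the first $n$ of the given closed nowhere dense sets).

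The core of the argument is the construction, by induction on $n=|s|$, of non-empty open sets $V_s\subseteq\mathcal S$ indexed by finite binary strings $s$, with $V_\emptyset=\mathcal S$, such that: (i) $\overline{V_{s0}}\cup\overline{V_{s1}}\subseteq V_s$ and $\overline{V_{s0}}\cap\overline{V_{s1}}=\emptyset$; (ii) $\operatorname{diam}_\rho(V_s)<2^{-|s|}$ whenever $|s|\ge 1$; and, crucially, (iii) $\overline{V_s}\times\overline{V_t}\subseteq U_n$ for all $s\ne t$ with $|s|=|t|=n$. In the inductive step, having the $V_s$ with $|s|=n$, I would first choose points $(x_u)_u$, one for each string $u$ of length $n+1$, with $x_u\in V_{u\restriction n}$, the $x_u$ pairwise distinct, and $(x_u,x_v)\in U_{n+1}$ for all $u\ne v$. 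Such a tuple exists: in the finite product $\prod_u V_{u\restriction n}$ the set of admissible tuples is dense, because each $U_{n+1}$ is dense open in $\mathcal S\times\mathcal S$ and $\mathcal S$ is infinite, so one may shrink the finitely many factors one pair of coordinates at a time. I would then take for $V_u$ a sufficiently small open $\rho$-ball around $x_u$ with $\overline{V_u}\subseteq V_{u\restriction n}$, $\operatorname{diam}_\rho(V_u)<2^{-(n+1)}$, pairwise disjoint closures, and $\overline{V_u}\times\overline{V_v}\subseteq U_{n+1}$ for $u\ne v$; these are finitely many open conditions holding at the $x_u$, so small enough balls work.

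Finally I would set $\mathcal P=\bigcap_n\bigcup_{|s|=n}\overline{V_s}$. By (i) and (ii), the map sending $\alpha\in\{0,1\}^{\N}$ to the unique point $f(\alpha)$ of $\bigcap_n\overline{V_{\alpha\restriction n}}$ is a continuous bijection onto $\mathcal P$ from a compact space, hence a homeomorphism, so $\mathcal P$ is a non-empty perfect subset of $\mathcal S$. If $\alpha\ne\beta$, let $n_0$ be least with $\alpha(n_0)\ne\beta(n_0)$; then for every $n>n_0$ the strings $\alpha\restriction n\ne\beta\restriction n$ have length $n$, so by (iii) $(f(\alpha),f(\beta))\in\overline{V_{\alpha\restriction n}}\times\overline{V_{\beta\restriction n}}\subseteq U_n$, and since the $U_n$ are decreasing this yields $(f(\alpha),f(\beta))\in U_m$ for every $m$. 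Hence $(f(\alpha),f(\beta))\in\bigcap_m U_m$, so $(f(\alpha),f(\beta))\notin\mathcal E$, as required. The one delicate point is maintaining condition (iii) through the induction, and this is exactly where meagreness of $\mathcal E$ — i.e.\ density of the $U_n$ — is used, via the Baire-type shrinking argument in the finite products $\prod_u V_{u\restriction n}$.
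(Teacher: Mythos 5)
Your proof is correct. The paper does not actually prove this statement --- it quotes it as a classical result and cites Mycielski's original article --- so there is nothing internal to compare against; your Cantor-scheme (fusion) argument is the standard proof of the Kuratowski--Mycielski theorem (cf.\ \cite[Theorem 19.1]{Kech}), and the key points (the automatic perfectness of $\mathcal S$ from reflexivity plus meagreness of $\mathcal E$, the density of the set of admissible tuples in the finite product, and the propagation of condition (iii) through the induction) are all handled correctly.
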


\begin{proof}[Proof of Theorem \ref{main}]
Lemma \ref{Thomas} and the assumption that every non-empty open subset of $\mathcal{S}$ contains non-quasi-isometric groups imply that $\Cqi$ is a nowhere dense subset of $\mathcal{S}\times \mathcal{S}$ for every $C>0$. Since a countable union of nowhere dense subsets is meagre and $\sim_{q.i.}$ is the union of $\Cqi$ for all $C\in \mathbb N$, see (\ref{u}), the relation $\sim_{q.i}$ is meagre in $\mathcal{S}\times \mathcal{S}$.

Note that $\mathcal{S}$ is a Polish space being a closed subset of the Polish space $\mathcal G$.
Let $\mathcal{D} \subseteq \mathcal{S}$ be a comeagre subset. By the Baire category theorem, $\mathcal{D}$ contains a dense $G_\delta$-subset $\mathcal{D}_0$ of $\mathcal{S}$. Note that $\mathcal{D}_0$ is itself a Polish space being a $G_\delta$-subset of the Polish space $\mathcal{S}$ (see \cite[Theorem~I.3.11]{Kech}).
Since $\sim_{q.i}$ is meagre in $\mathcal{S}\times \mathcal{S}$, it follows that $\sim_{q.i}$ is also meagre in $\mathcal{D}_0 \times \mathcal{D}_0$
(indeed, it is easy to see that for a meagre subset $\mathcal{M}$ and a dense subset $\mathcal{Y}$ in a topological space, $\mathcal{M} \cap \mathcal{Y}$ is meagre in $\mathcal{Y}$).
Therefore, by Mycielski's theorem, there exists a non-empty perfect subset $\mathcal{P}\subseteq \mathcal{D}_0 \subseteq \mathcal{D}$ consisting of pairwise non-quasi-isometric groups. In particular, $\mathcal P$ has cardinality $2^{\aleph_0}$.
\end{proof}

We now prove Corollary \ref{fp} and discuss its relation to some previously known results.

\begin{proof}[Proof of Corollary \ref{fp}]
Let $\mathcal{S}$ be a perfect subset of $\mathcal G$ that contains a dense subset of finitely presented groups. Let $\mathcal{U}$ be any non-empty open subset of $\mathcal{S}$. By our assumption, there is a finitely presented marked group $(G,X)\in \mathcal{U}$. Being a non-empty open subset of a completely metrizable space without isolated points, $\mathcal{U}$ is uncountable
by the Baire category theorem. Since there are only countably many finitely presented groups, $\mathcal{U}$ contains a marked group $(H,Y)$ that is not finitely presented. Recall that  being finitely presented is a quasi-isometry invariant
(see \cite[I.8.24]{BH}), therefore, $G\not\sim_{q.i.} H$ and Theorem \ref{main} applies.
\end{proof}

\begin{ex}\label{ex1}
In \cite{Bow} Bowditch considers an infinite set of words $w_1, w_2, \ldots $ in the alphabet $\{a, b\}$, satisfying the $C^\prime (1/6)$ small cancellation condition, and for each $I\subseteq \mathbb N$ defines the group \[G_I=\langle a,b\mid w_i=1, \, i\in I\rangle .\]
He then uses a quasi-isometric invariant based of the notion of a \emph{taut loop} to distinguish uncountably many quasi-isometry classes in the set $ \mathcal{S}=\{ G_I\mid I\subseteq \mathbb N\}$.

We note that computing this invariant is unnecessary. Indeed Greendlinger's lemma for $C^\prime (1/6)$ presentations \cite[V.4.5]{LS} ensures that the map $2^\mathbb N\to \mathcal G$, defined by $I\mapsto (G_I,\{a,b\})$, is injective and continuous. In particular, its image is homeomorphic to the Cantor set and contains a dense subset of finitely presented groups (corresponding to finite subsets of $\mathbb N$). Therefore, Corollary \ref{fp} applies.
\end{ex}

\begin{ex}\label{ex2}
In \cite{CT}, Cornulier and Tessera start with a finitely presented solvable group $G$, which has a central subgroup of the form $E=\oplus_{i\in \mathbb N} E_i$, where $E_i\cong \mathbb Z_p$ for all $i$ and some fixed prime $p$. Then they use a rather sophisticated invariant -- the set of ultrafilters, for which the asymptotic cones of a group are simply connected -- to distinguish uncountably many non-quasi-isometric quotients of $G$. Our approach allows one to avoid computing this invariant as every such a group $G$ always has $2^{\aleph_0}$ non-quasi-isometric quotients. This result is an easy consequence of Corollary \ref{fp}. For an expositional reason, we postpone the proof till Section~\ref{sec:central_ext} (see Corollary \ref{fpZ}) as it uses the notation introduced there.
\end{ex}

\begin{ex}\label{ex3} We leave it to the reader to verify that the proof of quasi-isometric diversity of groups constructed by Kropholler--Leary--Soroko \cite{KLS} and Gruber--Sisto \cite{GS} can also be simplified to Corollary \ref{fp}. More precisely, our result allows one to avoid computing Bowditch's invariant in \cite{KLS} and divergence functions in \cite{GS} for the purpose of distinguishing non-quasi-isometric groups. Note, however, that we do need to use some technical lemmas from these papers to establish that the corresponding subspaces of $\G$ are perfect.
\end{ex}

\begin{ex}\label{ex4}
The quasi-isometric diversity of groups constructed by Grigorchuk \cite{Gri} cannot be derived from Corollary \ref{fp}. However, it can be explained using Theorem \ref{main}. Indeed, Grigorchuk shows that there exists a (non-empty) perfect subset of $\mathcal G$, which contains a dense subset of groups of exponential growth and a dense subset of groups of subexponetial growth; therefore, Theorem \ref{main} applies. Granted, proving the facts mentioned in the previous sentence is not much easier than distinguishing $2^{\aleph_0}$ inequivalent growth functions. Thus, unlike in Examples \ref{ex1}--\ref{ex3}, our approach does not help much here (and, after all, constructing non-quasi-isometric groups was not the main purpose of \cite{Gri}).
\end{ex}

\section{Generic properties and limits of hyperbolic groups}\label{SecGen}

Every hyperbolic group $G$ has a maximal finite normal subgroup, which we denote by $E(G)$ and call the \emph{finite radical} of $G$. A hyperbolic group is called \emph{non-elementary} if it is not virtually cyclic. We denote by $\mathcal H_0$ the set of all marked groups $(G,X)$ such that $G$ is non-elementary, hyperbolic, and has trivial finite radical.

The main goal of this section is to prove Proposition \ref{P1P3}. Formally, by a \emph{group-theoretic property} we mean a predicate $$P\colon \G \to \{ true,\; false\}$$ that is constant on isomorphism classes, i.e., we have $P(G,X)=P(H,Y)$ whenever $G\cong H$. We also say that a finitely generated group $G$ \emph{has property $P$} if $P(G,X)=true$ for some (equivalently, any) finite generating set $X$.

For any subset $\mathcal S\subseteq \mathcal G$ we will write $\overline{\mathcal S}$ to denote the closure of $\mathcal S$ in $\G$. If $P$ is any group-theoretic property, we define
\[
P(\mathcal S) =\{ (G,X)\in \mathcal S \mid P(G,X)=true \}.
\]

Recall that a subset of a topological space is called a \emph{$G_\delta$-set} if it can be represented as a countable intersection of open sets. By the Baire category theorem, a subset of a Polish space is comeagre if and only if it contains a dense $G_\delta$-set.

\begin{defn}
A group-theoretic property $P$ is a \emph{$G_\delta$-property} if $P(\G)$ is a $G_\delta$-set. Further, we say that $P$ is \emph{generic} in some $\mathcal S\subseteq \G$ if $P(\S)$ is comeagre in $\mathcal S$.
\end{defn}

The idea of studying generic properties of marked groups goes back to the paper \cite{Cha} by Champetier, who proved that groups with certain exotic properties (e.g., non-amenable groups without free subgroups) are generic in the closure of the set $\mathcal H_{cc}$, of all pairs $(G,X)\in \G$ such that $G$ is non-elementary, hyperbolic, and the centralizer of every element in $G\setminus\{1\}$ is cyclic. Note that $\mathcal H_{cc}\subseteq \mathcal H_0$, but these classes are not equal. E.g., the group $(\mathbb Z/2)^2 \ast \mathbb Z$ belongs to $\mathcal H_0$ but not to $\mathcal H_{cc}$. In the same paper, Champetier also proved that torsion groups are generic in the closure of $\mathcal H$, where $\mathcal H$ is the set of all non-elementary hyperbolic marked groups.

In this paper, we prefer to work with $\overline{\mathcal H}_0$ because generic groups in this set have (arguably) more interesting properties than those in $\overline{\mathcal H}$ or $\overline{\mathcal H}_{cc}$. For example, it is not difficult to show that generic groups in $\overline{\mathcal H}$ are not simple. Generic groups in $\overline{\mathcal H}_{cc}$ are, in fact, simple (although this was not proved in \cite{Cha}). However, no group in $\overline{\mathcal H}_{cc}$ is a common quotient of all non-elementary hyperbolic groups. It is also not clear if generic groups in $\overline{\mathcal H}_{cc}$ are torsion. In contrast, generic groups in $\overline{\mathcal H}_{0}$ enjoy all these properties.

Our proof of Proposition \ref{P1P3} has two ingredients.  The first one makes use of basic model theory. We begin by reviewing the necessary background.

Let $\L$ denote the language of groups. That is, $\L$ is the first-order language with the signature $\{ 1, \cdot, ^{-1}\}$. Recall that $\L_{\omega_1, \omega}$ is the infinitary version of $\L$, where terms, atomic formulas, and formulas are constructed from the symbols of $\mathcal L$ using the same syntactic rules as in the standard first-order logic with one exception: countably infinite conjunctions and disjunctions of formulas are allowed. For the formal definition, see \cite[Chapter 1]{Mar16}. As in the first-order logic, an $\L_{\omega_1, \omega}$-\emph{sentence} is an $\L_{\omega_1, \omega}$-formula without free variables.

\begin{ex}\label{tor}
Consider the $\Lw$-sentence
$$
\tau \colon \;\;\; \forall \, g \; \bigvee_{n\in \N} g^n=1
$$
Clearly, $G\models \tau$ if and only if $G$ is torsion.
\end{ex}

We will need a particular class of $\Lw$-formulas, denoted by $\Pi_2$, which is part of a more general hierarchy. We say that a class of formulas $\Theta$ is \emph{closed under adding universal (respectively, existential) quantifiers} if for every formula $\theta (x)\in \Theta$ with a free variable $x$, we have $\forall\, x\; \theta(x)\in \Theta$ (respectively, $\exists\, x\; \theta(x)\in \Theta$).

\begin{defn}
Let $\Sigma_1$ denote the smallest set of all $\Lw$-formulas that contains all finite Boolean combinations of atomic formulas and is closed under countable disjunctions, finite conjunctions, and adding existential quantifiers. By $\Pi_2$ we denote the set of all $\Lw$-formulas that contains all formulas from $\Sigma_1$ and is closed under countable conjunctions, finite disjunctions, and adding universal quantifiers. A group-theoretic property $P$ is \emph{$\Pi_2$-definable} if there exists an $\Pi_2$-sentence $\sigma$ such that $P(\G) = \{ (G,X)\mid G\models \sigma\}$.
\end{defn}

For instance, the formula $\tau$ in Example \ref{tor} is in the class $\Pi_2$ (but not in $\Sigma _1$). Therefore, being torsion is a $\Pi_2$-definable property.

The relevance of the infinitary logic to the study of generic properties is demonstrated by the proposition below, which has long been known to experts. The proof is fairly elementary and can be found in \cite[Proposition 5.1]{Osi21} (see also \cite[Proposition 3.2]{KOO}), but we provide a sketch for convenience of the reader.

\begin{prop}[folklore]\label{Gd}
A $\Pi_2$-definable group-theoretic property is a $G_\delta$-property.
\end{prop}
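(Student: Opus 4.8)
The plan is to show by induction on the structure of a $\Pi_2$-formula $\sigma$ that the set $\{(G,X)\in\G : G\models\sigma\}$ is a $G_\delta$-set, and in fact to prove a more refined statement from which this follows. Specifically, I would prove simultaneously that for every $\Sigma_1$-formula $\theta(x_1,\dots,x_k)$ with free variables among $x_1,\dots,x_k$, the set of tuples $((G,X),g_1,\dots,g_k)$ with $g_i\in G$ and $G\models\theta(g_1,\dots,g_k)$ is \emph{open} in the appropriate space (the space of marked groups with $k$ marked points, or concretely: for fixed $r$, whether $\theta$ holds is determined by the $r$-ball of the Cayley graph once $r$ is large enough, and the condition is a union over $r$ of clopen conditions), and dually that for every $\Pi_2$-formula the corresponding set is a $G_\delta$-set. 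The key point making the induction work is that $r$-local isomorphism type of $(G,X)$ determines truth of quantifier-free formulas whose variables are interpreted by elements in a bounded ball.

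The base case: a finite Boolean combination of atomic formulas $\theta(x_1,\dots,x_k)$, evaluated at elements $g_1,\dots,g_k$, only asserts equalities and inequalities between words in the $g_i$; if the $g_i$ lie in $B_{G,X}(m)$ then this is decided by $B_{G,X}(Nm)$ for $N$ depending on the word lengths appearing in $\theta$, hence the condition is clopen on each piece and in particular open. For the inductive step on $\Sigma_1$: finite conjunctions of open conditions are open; countable disjunctions of open conditions are open; and adding an existential quantifier, $\exists x\,\theta(x,\bar y)$, corresponds to taking the union over all potential witnesses $g\in G$ — here one uses that the witness can be taken in some ball $B_{G,X}(m)$, and the truth of $\theta$ at that witness is, by induction, an open condition, and a union of open sets is open; crucially one must check that "there exists a witness in $B_{G,X}(m)$" is itself a condition that only improves (stays true) under passing to groups $r$-locally isomorphic for large $r$, which is exactly why $\Sigma_1$-truth is open rather than merely Borel. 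For the inductive step on $\Pi_2$: every $\Sigma_1$-formula gives an open, hence $G_\delta$, set; countable conjunctions of $G_\delta$-sets are $G_\delta$; finite disjunctions of $G_\delta$-sets are $G_\delta$; and adding a universal quantifier $\forall x\,\theta(x,\bar y)$ gives $\bigcap_{g\in G}\{\,G\models\theta(g,\bar y)\,\}$, a countable intersection (since $G$ is countable) of $G_\delta$-sets, hence $G_\delta$. Applying this to a $\Pi_2$-sentence $\sigma$ (no free variables) yields that $P(\G)=\{(G,X): G\models\sigma\}$ is a $G_\delta$-set.

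The main obstacle is making precise the claim that $\Sigma_1$-truth is an \emph{open} condition, i.e., that it is preserved under sufficiently fine $r$-local isomorphism and under limits from inside the set. The subtlety is the existential quantifier: a priori a witness could "run off to infinity" as one varies the group, so one must argue that if $G\models\exists x\,\theta(x)$ then there is a uniform bound $m$ (depending only on the combinatorics of a sub-derivation, since $\theta$ is itself built from finitely much data at each stage of an infinitary formula — but countable disjunctions complicate this) with a witness in $B_{G,X}(m)$; strictly, for a fixed $G$ a witness exists in \emph{some} ball, and that suffices to place $(G,X)$ in an open subset of the solution set, which is all openness requires. One should also be mildly careful about the interplay with the marked-points space and with the fact that $\G=\bigcup_n\G_n$ is only locally compact; but since each $\G_n$ is Polish and a countable union of $G_\delta$-sets across the $\G_n$ is again $G_\delta$ in $\G$ (the $\G_n$ being clopen), this causes no real difficulty. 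I expect the clean way to present this is exactly as in \cite[Proposition 5.1]{Osi21}, organizing the induction around the pair of dual statements (\ $\Sigma_1\Rightarrow$ open, $\Pi_2\Rightarrow G_\delta$\,) with free variables ranging over elements of the group.
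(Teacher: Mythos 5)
Your proposal is correct and is essentially the paper's argument (which in turn is a sketch of \cite[Proposition 5.1]{Osi21}): both rest on the two facts that equations between words in the marked generators define clopen subsets of $\G$ and that quantifiers can be replaced by countable conjunctions/disjunctions over such words, with you organizing this as a structural induction ($\Sigma_1\Rightarrow$ open, $\Pi_2\Rightarrow G_\delta$) where the paper instead reduces a $\Pi_2$-sentence to a normal form. The one step to make explicit is that your universal-quantifier intersection $\bigcap_{g\in G}$ must be re-indexed by the countable set $T$ of words in the generating tuple, so that it becomes a genuine countable intersection of subsets of the space, each the preimage of a $G_\delta$-set under a continuous evaluation map --- this is precisely the paper's device of adjoining constants $c_1,c_2,\ldots$ interpreted as the generators.
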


\begin{proof}
Consider the language $\mathcal L^\prime$ obtained from $\mathcal L$ by adding constants $c_1, c_2, \ldots$. Given $(G,X)\in \G$, where $X=(x_1, \ldots, x_n)$, we interpret $c_i$ as $x_i$ for $1\le i\le n$ and as $1$ for $i>n$. Thus every $(G,X)\in \G$ becomes an $\mathcal L^\prime$-structure. We also denote by $\mathcal L^\prime _{\omega_1, \omega}$ the corresponding infinitary version of $\mathcal L^\prime$.

Let $T$ denote the set of all words in the alphabet $\{c_1, c_1^{-1}, c_2, c_2^{-1},\ldots \}$. For every $\mathcal L^\prime_{\omega_1, \omega}$-formula $\phi (x)$ with one free variable $x$ and for every $(G,X)\in \G$, we obviously have
\begin{equation}\label{eq1}
(G,X) \models \forall x\, \phi(x) \;\; {\rm if\; and\; only\; if} \;\; (G,X) \models \bigwedge_{t\in T} \phi (t)
\end{equation}
and
\begin{equation}\label{eq2}
(G,X) \models \exists x\, \phi(x) \;\; {\rm if\; and\; only\; if}\;\; (G,X) \models \bigvee_{t\in T} \phi (t).
\end{equation}
Using (\ref{eq1}) and (\ref{eq2}) we can eliminate quantifiers in $\L^\prime$-formulas and show that every $\Pi_2$-sentence $\sigma$ in  $\L_{\omega_1,\omega}$ is equivalent to a countable conjunction of countable disjunctions of Boolean combinations of atomic formulas of the form $t_1=t_2$ for some $t_1, t_2\in T$. It is easy to verify that the latter formulas define clopen subsets of $\G$ and hence the set of models of $\sigma$ is a countable intersection of open sets.
\end{proof}

For brevity, we introduce the following notation for group-theoretic properties under consideration.
\begin{enumerate}
\item[$Tor$:] being torsion;
\item[$S$:] being simple;
\item[$VC$:] being verbally complete;
\item[$QHyp$:] being a common quotient of all non-elementary hyperbolic groups.
\end{enumerate}

\begin{cor}\label{Gdcor}
The properties $Tor$, $S$, $VC$, and $QHyp$ are $G_\delta$-properties.
\end{cor}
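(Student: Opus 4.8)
The plan is to deduce Corollary \ref{Gdcor} from Proposition \ref{Gd} by exhibiting, for each of the four properties $Tor$, $S$, $VC$, and $QHyp$, a $\Pi_2$-sentence in $\Lw$ whose models are exactly the groups with that property. For $Tor$ this has already been done in Example \ref{tor}: the sentence $\tau\colon\forall g\bigvee_{n\in\N}g^n=1$ lies in $\Pi_2$, since $g^n=1$ is atomic, the countable disjunction is allowed in $\Sigma_1\subseteq\Pi_2$, and prefixing a universal quantifier keeps us in $\Pi_2$. So the real work is to produce the three remaining sentences.

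For simplicity, I would use the sentence asserting that the group is non-trivial and that every non-trivial element normally generates the whole group: $S\colon\bigl(\exists x\, (x\neq 1)\bigr)\wedge\forall g\Bigl(g=1\ \vee\ \bigwedge_{t\in T}\bigvee_{i=1}^k\exists y_1\dots\exists y_k\ \bigl(t=\prod_{j=1}^k y_j g^{\pm1} y_j^{-1}\bigr)\Bigr)$, where I need to be a bit more careful: $t$ ranges over a suitable countable set of ``target'' words (in the infinitary setting, words in the constants $c_1,c_2,\dots$, as in the proof of Proposition \ref{Gd}; or one can simply quantify $\forall h$ and say $h$ is a product of boundedly many conjugates of $g^{\pm1}$, taking a countable disjunction over the bound). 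Concretely: $\forall g\,\forall h\bigl(g=1\ \vee\ \bigvee_{k\in\N}\exists y_1\cdots\exists y_k\ h=y_1g^{\varepsilon_1}y_1^{-1}\cdots y_kg^{\varepsilon_k}y_k^{-1}\bigr)$ for all choices of signs $\varepsilon_i$ — one takes the disjunction over the finitely many sign patterns inside, which is a finite disjunction, and over $k$, which is a countable disjunction; the body is a finite conjunction of atomic formulas after moving to a single equation, and the existential quantifiers are allowed in $\Sigma_1$. The outer $\forall g\,\forall h$ keeps us in $\Pi_2$. For $VC$: the property says that for every element $g$ and every non-trivial reduced word $v(x_1,\dots,x_k)$ the equation $v=g$ is solvable. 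There are only countably many reduced words $v$, so this is $\forall g\bigwedge_{v}\exists x_1\cdots\exists x_k\ v(x_1,\dots,x_k)=g$, a countable conjunction (allowed in $\Pi_2$) of $\Sigma_1$-formulas, with a universal quantifier in front — again $\Pi_2$. For $QHyp$: fix a countable enumeration $(K_m,Z_m)$, $m\in\N$, of (marked) non-elementary hyperbolic groups, where $K_m=\la z_1,\dots,z_{n_m}\mid R_m\ra$ with $R_m$ a recursive (indeed finite, since hyperbolic groups are finitely presented) set of relators $r_{m,1},r_{m,2},\dots$; then $G$ is a common quotient of all of them iff for each $m$ there is a tuple in $G$ satisfying all relators of $K_m$ and generating $G$. ``Generating'' is the problematic clause, but it can be expressed as: every constant $c_i$ (equivalently every element, via the $T$-trick of Proposition \ref{Gd}) is a word in the chosen tuple, i.e. $\bigwedge_{i}\bigvee_{w}c_i=w(\bar x)$ over all words $w$; combined with $\exists\bar x\bigl(\bigwedge_j r_{m,j}(\bar x)=1\ \wedge\ \bigwedge_i\bigvee_w c_i=w(\bar x)\bigr)$ and then $\bigwedge_m$ of these, the result is $\Pi_2$. (Alternatively, since only countably many isomorphism types occur and each hyperbolic group is $\exists$-definably a quotient, one packages everything as a single countable conjunction of $\Sigma_1$-sentences.)

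Having written down these four sentences and checked in each case that the syntactic construction stays inside the class $\Pi_2$ — countable disjunctions and existential quantifiers at the ``inner'' level, countable conjunctions and a final block of universal quantifiers at the ``outer'' level — I would invoke Proposition \ref{Gd} to conclude that $Tor$, $S$, $VC$, $QHyp$ are all $G_\delta$-properties. The main obstacle is the $QHyp$ case: one must (i) observe that there are only countably many non-elementary hyperbolic groups up to isomorphism so that the conjunction over $m$ is countable, and (ii) encode the requirement that the chosen tuple $\bar x$ actually generates $G$, not merely a subgroup, which forces the use of the auxiliary constants $c_1,c_2,\dots$ and the quantifier-elimination device from the proof of Proposition \ref{Gd}. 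A secondary subtlety, shared by $S$ and $QHyp$, is that ``being a word in a fixed tuple'' and ``being a product of $k$ conjugates'' are naturally infinitary disjunctions; one must make sure these disjunctions are over countable index sets and that the individual disjuncts are genuinely $\Sigma_1$ (finite conjunctions of atomic formulas preceded by existential quantifiers), which they are once each equation is rewritten as a single word equation $u(\bar x,\bar y)=1$.
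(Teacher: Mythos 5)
Your treatment of $Tor$, $S$ and $VC$ coincides with the paper's: the same three families of sentences (up to swapping the roles of $g$ and $h$ in the simplicity formula and adding a harmless non-triviality conjunct), with the same syntactic bookkeeping, followed by an appeal to Proposition~\ref{Gd}. The gap is in the $QHyp$ case. Your proposed sentence has the shape $\bigwedge_m \exists \bar x\,\bigl(\theta_m(\bar x)\wedge \bigwedge_{i\in\N}\bigvee_w c_i=w(\bar x)\bigr)$, and the generation clause $\bigwedge_{i\in\N}\bigvee_w c_i=w(\bar x)$ is a \emph{countable} conjunction, so it lies in $\Pi_2$ but not in $\Sigma_1$ (which admits only finite conjunctions). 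Since $\Pi_2$, as defined in the paper, is closed under adding \emph{universal} quantifiers but not existential ones, applying $\exists\bar x$ to this formula takes you out of $\Pi_2$; the resulting sentence sits higher in the hierarchy and Proposition~\ref{Gd} does not apply to it. This is not a cosmetic issue: the surjectivity requirement is intrinsically a ``for every element of $G$ there is a word\dots'' condition trapped under the existential quantifier that chooses the tuple, and your parenthetical fallback (``each hyperbolic group is $\exists$-definably a quotient, so package everything as a countable conjunction of $\Sigma_1$-sentences'') asserts exactly the point that needs proof -- as a single sentence over all of $\G$ the generation clause is again a countable conjunction inside the existential.

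The paper sidesteps the logic entirely here: it uses the fact that for any \emph{finitely presented} group $H$ the set of quotients of $H$ is \emph{open} in $\G$ (expressing the images of the generators of $H$ as words in the marking, one sees that the finitely many equations witnessing ``quotient'' persist under $r$-local isomorphism for $r$ large), and then $QHyp(\G)$ is a countable intersection of such open sets because there are only countably many hyperbolic groups. If you want to stay within your model-theoretic framework, a correct repair is to work on each clopen piece $\G_n$ separately: there only $c_1,\dots,c_n$ need to be hit, the generation clause becomes a \emph{finite} conjunction of countable disjunctions of atomic formulas, hence $\Sigma_1$, and ``being a quotient of $H$'' becomes a $\Sigma_1$-condition (so an open set) on each $\G_n$; since the $\G_n$ are clopen and exhaust $\G$, the global set is still a countable intersection of open sets. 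Either way, some argument beyond ``the sentence is $\Pi_2$'' is needed for $QHyp$.
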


\begin{proof}
For property $Tor$, this result is due to Champetier \cite{Cha}. It also follows immediately from Proposition \ref{Gd} and Example \ref{tor} since $\tau \in \Pi_2$.

For property $S$, this result is proved in \cite[Proposition 3.4]{KOO}. It is also easy to see that $S$ can be expressed by the $\Pi_2$-formula
\[
\forall \, g \; \forall\, h\; \left( h=1\vee \left(\bigvee_{n\in \N} \exists\,t_1\; \ldots \exists\, t_n  \bigvee_{(\e_1, \ldots, \e_n)\in \{ -1, 1\}^n} g=t_1^{-1}h^{\e_1}t_1\cdots t_n^{-1}h^{\e_n}t_n\right) \right).
\]

Further, for every word $v=v(a_1, \ldots, a_k)$ in the infinite alphabet $$\mathcal A= \{a_1, a_1^{-1}, a_2, a_2^{-1}, \ldots\},$$ we consider the formula
$$
\varkappa _v\colon \;\;\; \forall\, g\; \exists\, a_1 \, \ldots\,  \exists\, a_k \;\; v(a_1, \ldots, a_k)=g.
$$
It is easy to see that being verbally complete means exactly that $G\models \varkappa_v$ for all words $v$ which are not equal to $1$ in the free group with basis $\{ a_1,a_2, \ldots\}$. Obviously, $\varkappa_v \in \Pi_2$ for any $v$. Since a countable conjunction of $\Pi_2$-sentences is again a $\Pi_2$-sentence, property $VC$ is $\Pi_2$-definable.

Finally, it is well-known and easy to show that for any finitely presented group $H$, the set of quotients of $H$ in $\G$ is open. Since hyperbolic groups are finitely presented and there are only countably many of them, $QHyp$ is a $G_\delta$-property.
\end{proof}

The second ingredient in the proof of Proposition \ref{P1P3} comes from small cancellation theory in hyperbolic groups, which was developed by  Olshanskii \cite{Ols} following ideas of Gromov \cite{Gro} and the earlier work on the geometric solution of the Burnside problem \cite{Ols82}.  We will need the following particular case of \cite[Theorem~2]{Ols}.

\begin{thm}[Olshanskii]\label{thm:Ols-1} Let $G$ be a non-elementary hyperbolic group with $E(G)=\{1\}$,
and let $H_1, \ldots, H_k$ be subgroups of $G$ such that each $H_i$ is not virtually cyclic and does not normalize any non-trivial finite subgroup of $G$. Then for every finite subset $M \subset G$ there exists a group $Q$ and an epimorphism $\varphi\colon G \to Q$ such that
\begin{enumerate}
  \item[(a)] $Q$ is non-elementary hyperbolic and $E(Q)=\{ 1\}$;
  \item[(b)] the restriction of $\varphi$ to $M$ is injective;
  \item[(c)] $\varphi (H_i)=Q$ for all $i=1, \ldots, k$;
  \item[(d)] if $G$ is torsion-free then so is $Q$;
  \item[(e)] for every $a\in M$, $C_Q(\varphi(a))=\varphi(C_G(a))$.
\end{enumerate}
\end{thm}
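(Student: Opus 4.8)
The plan is to obtain this statement as a special case of Olshanskii's general quotient theorem \cite[Theorem~2]{Ols}, whose engine is the geometric small cancellation theory over hyperbolic groups developed in \cite{Ols,Gro,Ols82}. The first step is to fix the relevant vocabulary: for a loxodromic (infinite-order, hyperbolic) element $g$ of a hyperbolic group $G$ there is a unique maximal elementary (virtually cyclic) subgroup $E(g)$ containing $g$, and a word is said to satisfy a small cancellation condition relative to $G$ if its cyclic shifts and those of the defining relations of $G$ overlap only in uniformly short pieces. Under the hypothesis $E(G)=\{1\}$, the group $G$ contains an abundance of loxodromic elements whose maximal elementary subgroups are infinite cyclic and pairwise independent (their fixed-point sets on the Gromov boundary are disjoint), and these are the building blocks for the relators.

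The core construction is to force $\varphi(H_i)=Q$ by expressing every generator of $G$ as a word in elements of the subgroups $H_i$. Concretely, I would fix a finite generating tuple $(a_1,\dots,a_m)$ of $G$, and for each $j$ select a long word $u_j$ in loxodromic elements drawn from $H_1,\dots,H_k$, then set $Q=G/\langle\langle a_ju_j^{-1} \mid 1\le j\le m\rangle\rangle$ with $\varphi$ the natural projection. The assumption that each $H_i$ is not virtually cyclic and normalizes no non-trivial finite subgroup is exactly what lets Olshanskii's machinery locate, inside each $H_i$, loxodromic elements with the independence and small cancellation properties required for the relators $a_ju_j^{-1}$ to satisfy a $C'(\lambda)$-type condition over $G$ for arbitrarily small $\lambda$. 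Since each $\varphi(a_j)$ then lies in $\varphi(H_i)$, surjectivity (c) is immediate, and taking the $u_j$ long enough keeps the relators disjoint from $M$ so as to secure (b).

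With the relators in place, conclusions (a), (b), (d), (e) follow from the standard analysis of van Kampen diagrams over the small cancellation presentation. Hyperbolicity and non-elementarity of $Q$ are the basic output of the theory; the equality $E(Q)=\{1\}$ is guaranteed because the relators are built from elements normalizing no finite subgroup, so no new finite normal subgroup can appear in the quotient. Injectivity on $M$ is a Greendlinger-type statement: a non-trivial element represented by a short word cannot bound a reduced diagram containing a large portion of a relator, so short elements, in particular distinct elements of $M$, survive and stay distinct. If $G$ is torsion-free then the added relators introduce no torsion, since any torsion element of $Q$ would force a reduced diagram with a closed loxodromic-labelled boundary, which the small cancellation condition forbids, giving (d). For (e), the inclusion $\varphi(C_G(a))\subseteq C_Q(\varphi(a))$ is automatic, and the reverse inclusion for $a\in M$ follows by showing that any element of $Q$ commuting with $\varphi(a)$ lifts to an element of $G$ commuting with $a$, again via diagram surgery controlling commutation in the quotient.

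The main obstacle is the simultaneous fulfilment of all five conclusions, in particular the coupling of the surjectivity requirement (c) with the rigidity requirements (a) and (e). Forcing $\varphi(H_i)=Q$ means imposing many relations, which threatens to collapse $G$ too far, destroy hyperbolicity, or manufacture finite normal subgroups and unexpected centralizers; conversely, preserving $E(Q)=\{1\}$ and the centralizers of elements of $M$ demands that the relators interact only tamely with the existing geometry of $G$. Reconciling these is precisely what Olshanskii's refined small cancellation theory accomplishes, and the hypothesis that no $H_i$ normalizes a non-trivial finite subgroup is the exact condition preventing the collapse from creating new finite normal subgroups. The remainder is the careful bookkeeping of diagram estimates, which I would import wholesale from \cite[Theorem~2]{Ols} rather than reprove.
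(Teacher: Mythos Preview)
Your proposal is aligned with the paper's treatment: the paper does not prove this theorem at all but simply cites it as a particular case of \cite[Theorem~2]{Ols}, which is exactly the source you identify and whose argument you sketch. The only point worth noting is that the paper flags the clause $E(Q)=\{1\}$ in (a) as not being explicitly stated in \cite{Ols} and instead refers to \cite[Theorem~1, claim~9]{Min} for it; your informal justification for this clause is plausible but, like the paper, you would ultimately need to cite a place where the diagram analysis is actually carried out.
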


\begin{rem} The second part of condition (a) was not stated in \cite{Ols}, but can be proved using the same technique; see, for example, claim 9 of \cite[Theorem 1]{Min}.
\end{rem}

By abuse of notation, we write $G\in \S$ for a finitely generated group $G$ and a subset $\S\subseteq \G$, if $(G,X)\in \S$ for some finite generating set $X$ of $G$. For an element $g \in G$, we denote by $\ll g \rr$ the normal closure of $g$ in the group $G$.

\begin{lem}\label{nc} For any $G\in \mathcal H_0$ and any finite subsets $M \subset G$ and $L\subset G\setminus\{1\}$,
there exists $Q\in \mathcal H_0$ and an epimorphism $\varphi\colon G \to Q$ such that the restriction of $\varphi$ to $M$ is injective
and $Q= \ll\varphi(g) \rr$ for every $g \in L$.
\end{lem}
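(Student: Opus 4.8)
The plan is to deduce this lemma from Olshanskii's Theorem \ref{thm:Ols-1} by choosing the subgroups $H_i$ appropriately. The point of the statement is that we want a quotient $Q\in\mathcal H_0$ of $G$ in which the image of each prescribed non-trivial element $g\in L$ normally generates the whole group, while a prescribed finite set $M$ is preserved. First I would enlarge $M$: replace $M$ by $M\cup L\cup\{1\}$, so that we may assume $L\subseteq M$ and hence condition (b) of Theorem \ref{thm:Ols-1} will guarantee in particular that the images of the elements of $L$ remain non-trivial (and even distinct from each other). Then for each $g\in L$ I would set $H_g$ to be the cyclic subgroup $\langle g\rangle\le G$ — but this does not work directly, because a cyclic subgroup is virtually cyclic, so Theorem \ref{thm:Ols-1} does not apply to it. Instead, the idea is to pick, for each $g\in L$, a subgroup $H_g\le G$ that \emph{contains} $g$, is not virtually cyclic, and does not normalize any non-trivial finite subgroup of $G$; then in the quotient $\varphi(H_g)=Q$ forces $\ll\varphi(g)\rr=Q$ provided we additionally know that $\varphi(g)$ normally generates $\varphi(H_g)$.

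The cleanest way to arrange the last point is to use a two-step argument or to choose $H_g$ even more carefully. One natural choice: since $G$ is non-elementary hyperbolic with $E(G)=\{1\}$, for any non-trivial $g$ there is an element $h\in G$ such that $\langle g,h\rangle$ is a non-elementary (hence non-virtually-cyclic) subgroup — e.g. take $h$ loxodromic with axis transverse to that of $g$ if $g$ is loxodromic, and use that $g$ has infinite order or, if $g$ has finite order, use that $\langle g\rangle$ together with a suitable loxodromic generates a non-elementary subgroup. A subgroup of a group with trivial finite radical that is non-elementary cannot normalize a non-trivial finite subgroup (its normalizer would be a finite-radical-type obstruction; more precisely, if a non-elementary subgroup $H$ normalized a non-trivial finite $K$, then $K$ would lie in $E(\langle H,K\rangle)$, and by the classification of elementary/non-elementary behaviour one checks $K\subseteq E(G)=\{1\}$). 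This is exactly the type of hyperbolic-group bookkeeping the authors use elsewhere; I expect to cite standard facts about elementary subgroups of hyperbolic groups (e.g. from \cite{Ols} or \cite{Min}) rather than reprove them. Then I would apply Theorem \ref{thm:Ols-1} to $G$, the list $H_{g}$ ($g\in L$), and the finite set $M$, obtaining $Q$ and $\varphi\colon G\to Q$ with $Q\in\mathcal H_0$ (conditions (a)), $\varphi|_M$ injective (condition (b)), and $\varphi(H_g)=Q$ for all $g\in L$ (condition (c)).

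It remains to upgrade $\varphi(H_g)=Q$ to $\ll\varphi(g)\rr_Q=Q$. This is where I would be most careful: knowing $H_g$ is normally generated by $g$ inside $G$ is the property I actually need, since $\varphi$ surjective and $g\in H_g$ with $\ll g\rr_{H_g}=H_g$ would give $\ll\varphi(g)\rr_Q\supseteq\varphi(\ll g\rr_{H_g})=\varphi(H_g)=Q$. So the subgroup $H_g$ should be chosen to be \emph{normally generated by $g$ in $G$}; the obvious candidate is $H_g:=\ll g\rr_G$ itself, the normal closure of $g$ in $G$. For $G\in\mathcal H_0$ and $g\neq 1$, the normal closure $\ll g\rr_G$ is a non-trivial normal subgroup of a non-elementary hyperbolic group with trivial finite radical, hence it is non-elementary (any non-trivial normal subgroup of such a group is non-elementary — a standard fact) and it is normal in $G$, so a finite subgroup it normalized would be normalized by a non-elementary subgroup and thus be trivial as above. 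Thus $H_g:=\ll g\rr_G$ satisfies all hypotheses of Theorem \ref{thm:Ols-1}, and $\ll g\rr_{H_g}=H_g$ trivially. Applying the theorem with the $H_g$, $g\in L$, and the finite set $M$ then yields $Q$ and $\varphi$ with $\ll\varphi(g)\rr_Q\supseteq\varphi(\ll g\rr_G)=Q$, i.e. $Q=\ll\varphi(g)\rr$ for every $g\in L$, completing the proof. The only real obstacle is the verification that $\ll g\rr_G$ is not virtually cyclic and does not normalize a non-trivial finite subgroup; both follow from the non-elementarity of non-trivial normal subgroups of groups in $\mathcal H_0$, which I would state as a preliminary observation with a reference to the structure theory of hyperbolic groups.
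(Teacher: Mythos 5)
Your proposal follows essentially the same route as the paper: take $H_g=\ll g\rr_G$, observe that it is an infinite normal (hence non-virtually-cyclic) subgroup because $E(G)=\{1\}$ and $G$ is non-elementary, and feed these subgroups into Theorem~\ref{thm:Ols-1}; the identity $\ll\varphi(g)\rr_Q=\varphi(\ll g\rr_G)=Q$ then finishes the argument exactly as in the paper. The one place where your justification is wrong as stated is the claim that a non-elementary subgroup $H$ of a group with $E(G)=\{1\}$ cannot normalize a non-trivial finite subgroup because such a subgroup ``would lie in $E(G)$.'' That implication is false in general: in $G=(\mathbb Z/2\times F_2)*\mathbb Z$ one has $E(G)=\{1\}$, yet the non-elementary subgroup $F_2$ centralizes the non-trivial finite subgroup $\mathbb Z/2\times\{1\}$. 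What saves the argument is precisely the normality of $H_g=\ll g\rr_G$ in $G$, which you mention but do not actually use at this step: by Olshanskii's Proposition~1 in \cite{Ols}, a non-elementary subgroup $H$ of a hyperbolic group normalizes a unique \emph{maximal} finite subgroup $E_G(H)$; since $H_g\lhd G$, every $G$-conjugate of $E_G(H_g)$ is also normalized by $H_g$, so uniqueness forces $E_G(H_g)\lhd G$ and hence $E_G(H_g)\le E(G)=\{1\}$. With that repair (which is exactly the paper's argument), your proof is complete.
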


\begin{proof}
Let $L=\{g_1,\dots,g_k\}$ be the list of all elements of $L$, and set $H_i=\ll g_i\rr$, $i=1,\dots,k$. Since $E(G)=\{1\}$ and $g_i \neq 1$,
$H_i$ is an infinite normal subgroup of $G$, for each $i=1,\dots,k$. A virtually cyclic subgroup of a hyperbolic group is necessarily quasiconvex (see \cite[III.$\Gamma$.3.10 and III.$\Gamma$.3.6]{BH}). Hence every infinite virtually cyclic subgroup of $G$ must have finite index in its normalizer (see \cite[III.$\Gamma$.3.16]{BH}). As $G$ is non-elementary, it follows that $H_i$ is not virtually cyclic, for each $i =1,\dots,k$.

By \cite[Proposition 1]{Ols}, for each $i$, there exists a maximal finite subgroup $E_i \leqslant G$, normalized by $H_i$. Since $H_i \lhd G$, $g^{-1}E_ig$ is also normalized by $H_i$ for every $g\in G$ and, therefore, $g^{-1}E_ig=E_i$, by maximality. It follows that $E_i\lhd G$,  hence $E_i=\{1\}$ as $E(G)=\{ 1\}$.

Thus the assumptions of Theorem~\ref{thm:Ols-1} are satisfied, and we can apply it to find a non-elementary hyperbolic group $Q$ with trivial finite radical and an epimorphism $\varphi\colon G \to Q$ which enjoy the desired properties.
\end{proof}

\begin{cor}\label{dense}
Suppose that $P$ is a group-theoretic property such that every group from $\mathcal H_0$ has a quotient $Q\in P(\overline{\mathcal H}_0)$. Then $P(\overline{\mathcal H}_0)$ is dense in $\overline{\mathcal H}_0$.
\end{cor}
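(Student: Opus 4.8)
The plan is to show that any non-empty open subset of $\overline{\mathcal H}_0$ meets $P(\overline{\mathcal H}_0)$; since basic open sets of $\G$ are specified by finitely many constraints on balls in the Cayley graph, this amounts to finding a group in $P(\overline{\mathcal H}_0)$ that is $r$-locally isomorphic to a prescribed marked group for an arbitrary radius $r$. First I would reduce to approximating points of $\mathcal H_0$ rather than of $\overline{\mathcal H}_0$: since $\mathcal H_0$ is dense in $\overline{\mathcal H}_0$ by definition, it suffices to prove that every $(G,X)\in \mathcal H_0$ lies in the closure of $P(\overline{\mathcal H}_0)$. So fix $(G,X)\in \mathcal H_0$ and $r\in \N$, and set $M=B_{G,X}(r)$.

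Next I would apply Lemma \ref{nc} (or rather, repeated applications of it, or directly Theorem \ref{thm:Ols-1}) to produce an epimorphism whose kernel avoids $M\setminus\{1\}$; more precisely, by the hypothesis of the corollary, $(G,X)$ admits a quotient $(Q,X_Q)\in P(\overline{\mathcal H}_0)$ via some epimorphism $\pi\colon G\to Q$. The issue is that $\pi$ need not be injective on $M$, so $(Q,X_Q)$ need not be $r$-locally isomorphic to $(G,X)$. To fix this I would instead build a tower of quotients: since $Q\in P(\overline{\mathcal H}_0)$ is a limit of marked groups, there is a sequence $(Q_i,X_i)$ converging to $(Q,X_Q)$ with each $Q_i$ some explicit group — but rather than chasing this, the cleaner route is to exploit that $P(\overline{\mathcal H}_0)\subseteq \overline{\mathcal H}_0$ together with a direct-limit construction as in Example \ref{limex}. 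Concretely: starting from $G_0=G$, use Lemma \ref{nc} (with $L$ chosen to push toward property $P$, and $M$ the relevant ball) to build a sequence $G=G_0\twoheadrightarrow G_1\twoheadrightarrow G_2\twoheadrightarrow\cdots$ in $\mathcal H_0$ whose direct limit $G_\infty$ has a quotient in $P(\overline{\mathcal H}_0)$; the key point is that at each stage the balls of ever-larger radius are preserved by (b) of Lemma \ref{nc}, so $(G_i,X_i)\to (G_\infty,X_\infty)$ and $(G_\infty,X_\infty)$ is $r$-locally isomorphic to $(G,X)$ for the chosen $r$. Honestly, though, I expect the intended proof is shorter: since the hypothesis already hands us a quotient $Q\in P(\overline{\mathcal H}_0)$ of any $G\in\mathcal H_0$, and quotients of $(G,X)$ that are injective on larger and larger balls converge back to $(G,X)$, one should combine the hypothesis with an approximation of $Q$ from within $\mathcal H_0$ (using that $P(\overline{\mathcal H}_0)\subseteq\overline{\mathcal H}_0$) and a diagonal argument.

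The main obstacle is the bookkeeping that makes the approximation go in the right direction: we need quotients that simultaneously (i) remain in $\mathcal H_0$ so that Olshanskii's theorem stays applicable, (ii) are injective on the ball $B_{G,X}(r)$ so they stay close to $(G,X)$, and (iii) converge to a group actually possessing property $P$. Point (iii) is where the hypothesis ``every group in $\mathcal H_0$ has a quotient in $P(\overline{\mathcal H}_0)$'' must be invoked, and the subtlety is that applying it to later groups $G_i$ in the tower rather than to $G$ itself, and interleaving those applications with the injectivity requirement, is exactly what yields a limit in $P(\overline{\mathcal H}_0)$ that is $r$-locally isomorphic to $(G,X)$. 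Once the tower is set up correctly, Lemma \ref{NG} and Example \ref{limex} give the convergence for free, and since $r$ was arbitrary, $(G,X)\in\overline{P(\overline{\mathcal H}_0)}$; as $(G,X)\in\mathcal H_0$ was arbitrary and $\mathcal H_0$ is dense, density of $P(\overline{\mathcal H}_0)$ follows.
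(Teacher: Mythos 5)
You have correctly reduced the problem to showing that, for each $(G,X)\in\mathcal H_0$ and each $r$, some group in $P(\overline{\mathcal H}_0)$ is $r$-locally isomorphic to $(G,X)$, and you have correctly identified the obstruction: the quotient $Q\in P(\overline{\mathcal H}_0)$ supplied by the hypothesis need not be injective on $B_{G,X}(r)$. But your proposed resolution --- a tower $G=G_0\twoheadrightarrow G_1\twoheadrightarrow\cdots$ with a diagonal argument --- does not close the gap, and in fact cannot work as described. The hypothesis only provides quotients with property $P$ for groups in $\mathcal H_0$, whereas the direct limit $G_\infty$ of your tower will in general not lie in $\mathcal H_0$ (it is typically an infinitely presented limit group, not hyperbolic), so you cannot apply the hypothesis to it. And if you instead apply the hypothesis to one of the hyperbolic stages $G_i$, you are back to the original problem: nothing forces that quotient to be injective on the ball. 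Your phrase ``with $L$ chosen to push toward property $P$'' also misreads Lemma~\ref{nc}: the set $L$ there has nothing to do with $P$; its role is to make the normal closures $\ll\varphi(g)\rr$ equal to all of $Q$.

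The missing idea is exactly that last point, and it makes the proof a two-step argument with no tower. Apply Lemma~\ref{nc} \emph{once} to $G$ with $M=B_{G,X}(r)$ and $L=B_{G,X}(2r)\setminus\{1\}$, obtaining $(Q,Y)\in\mathcal H_0$ that is $r$-locally isomorphic to $(G,X)$ and in which $\ll\varphi(g)\rr=Q$ for every nontrivial $g$ of length at most $2r$. Now apply the hypothesis to $Q$ (not to $G$) to get a quotient $R\in P(\overline{\mathcal H}_0)$. The composite $G\to Q\to R$ is automatically injective on $B_{G,X}(r)$: if two elements of that ball had the same image, some nontrivial $g$ with $|g|_X\le 2r$ would die in $R$, and since $\varphi(g)$ normally generates $Q$ this would force $R=\{1\}$, contradicting $R\in\overline{\mathcal H}_0$. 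So $(R,Z)$ is $r$-locally isomorphic to $(G,X)$ and lies in $P(\overline{\mathcal H}_0)$, which is what was needed. The normal-generation condition is the mechanism that converts ``some quotient has $P$'' into ``some quotient has $P$ and remembers the $r$-ball,'' and without it your argument has no way to guarantee point (ii) of your own checklist.
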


\begin{proof}
It suffices to show that for every $(G,X)\in \mathcal H_0$, every neighborhood of $(G,X)$ contains an element of $P(\overline{\mathcal H}_0)$. By the definition of the topology on $\G$, this is equivalent to the following statement: for every $r\in \mathbb N$, there exists a group $(R,Z)\in P(\overline{\mathcal H}_0)$ that is $r$-locally isomorphic to $(G,X)$.

Fix some $r\in \mathbb N$. By applying Lemma \ref{nc} to $G$ and the finite sets $M=B_{G,X}(r)$ and $L=B_{G,X}(2r)\setminus \{1\}$, we can find a group $(Q,Y)\in \mathcal H_0$ and an epimorphism $\varphi \colon G\to Q$ mapping $X$ to $Y$ such that $(Q,Y)$ is $r$-isomorphic to $(G, X)$ and satisfies
\begin{equation}\label{gQ}
\ll \varphi(g) \rr =Q\;\;\;\;\; \forall \, g\in B_{G,X}(2r)\setminus\{ 1\}.
\end{equation}
By our assumption, $Q$ has a quotient group $R\in P(\overline{\mathcal H}_0)$. Let $Z$ be the image of $Y$ in $R$, so that $R=\langle Z \rangle$. The composition of the epimorphisms $G\stackrel{\varphi}\to Q\to R$ must be injective on $B_{G,X}(r)$; indeed, otherwise it sends a non-trivial element $g\in G$ of length $|g|_X\le 2r$ to $1$, which implies $R=\{ 1\}\notin \overline{\mathcal H}_0$ by (\ref{gQ}). It follows that $(G,X)$ and $(R,Z)$ are $r$-locally isomorphic.
\end{proof}

We are now ready to prove the main result of this section.

\begin{proof}[Proof of Proposition \ref{P1P3}]
Since any countable conjunction of generic properties is again generic, it suffices to prove genericity of the individual properties $Tor$, $S$, $VC$, and $QHyp$.

Note that for every topological space $\mathcal X$, any subspace $\S\subseteq \mathcal X$, and any $G_\delta$-subset $\mathcal D \subseteq X$, the intersection $\mathcal D\cap \S$ is a $G_\delta $-subset of $\S$. Hence, the subsets $Tor(\overline{\mathcal H}_0)$, $S(\overline{\mathcal H}_0)$, $VC(\overline{\mathcal H}_0)$,  and $QHyp(\overline{\mathcal H})$  are $G_\delta$-subsets of $\overline{\mathcal H}_0$ by Corollary~\ref{Gdcor}. To complete the proof, it suffices to show that these subsets are dense in $\overline{\mathcal H}_0$.

In \cite[Corollary 2]{Min}, the first author proved that there exists an infinite simple group $Q$, which is a common quotient of all non-elementary hyperbolic groups. This simple group is constructed as the direct limit of a sequence of hyperbolic groups and epimorphisms
\begin{equation}\label{ind}
G_1\stackrel{\e_1}\longrightarrow G_2\stackrel{\e_1}\longrightarrow \ldots
\end{equation}
such that each of the groups $G_i$ is non-elementary and has trivial finite radical; although these properties of $G_i$ are not included in the statement of \cite[Corollary 2]{Min}, they are explicitly stated (and used as an inductive assumption) in the proof. Thus, $Q\in \overline{\mathcal H}_0$ (see Example \ref{limex}). Now we can apply Corollary \ref{dense} and conclude that $S(\overline{\mathcal H}_0)$ and $QHyp(\overline{\mathcal H}_0)$ are dense in $\overline{\mathcal H}_0$.

The same argument works for $Tor(\overline{\mathcal H}_0)$ and $VC(\overline{\mathcal H}_0)$.
The only difference is that we have to refer to \cite[Theorem 2]{MO}.
In this paper, Mikhajlovskii and Olshanskii showed that each non-elementary hyperbolic group $G$ has a non-trivial  torsion  verbally complete quotient. The desired quotient group $L$ is constructed as the direct limit of a sequence of non-elementary hyperbolic groups and epimorphisms (\ref{ind}). In the latter paper, the property $E(G_i)=\{ 1\}$ is explicitly stated (and used) in the proof. Now we can apply Corollary \ref{dense} and conclude that $Tor(\overline{\mathcal H}_0)$ and $VC(\overline{\mathcal H}_0)$ are dense in $\overline{\mathcal H}_0$ as above.
\end{proof}

We conclude this section by mentioning two other applications of our method. Our main emphasis here is on illustrating our approach rather than proving particular results. For this reason, we keep our discussion very brief. Verifying details is left to the readers interested in breeding  quasi-isometrically diverse `monsters'.

Recall that a  that a group $G$ is a \emph{torsion-free Tarski Monster} if $G$ is non-cyclic, finitely generated and every non-trivial proper subgroup of $G$ is infinite cyclic. First examples of such groups were constructed by Olshanskii in \cite{Ols79}. Torsion-free Tarski Monsters with the additional properties listed in the corollary below were constructed by the first author in \cite{Min}.

\begin{cor}\label{Tarski}
There exists $2^{\aleph_0}$ pairwise non-quasi-isometric torsion-free Tarski Monsters that are common quotients of all non-cyclic torsion-free hyperbolic groups and have the property that all maximal subgroups are malnormal and conjugate to each other.
\end{cor}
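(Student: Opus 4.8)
The plan is to follow the template established by Corollary \ref{h}: namely, to combine Corollary \ref{fp} (applied to an appropriate perfect subspace of $\G$) with a genericity statement for the relevant class of torsion-free Tarski Monsters. First I would let $\mathcal H_{tf}$ denote the set of marked groups $(G,X)\in\G$ such that $G$ is non-cyclic and torsion-free hyperbolic, and set $\S=\overline{\mathcal H_{tf}}$. Since torsion-free hyperbolic groups are non-elementary with trivial finite radical, we have $\mathcal H_{tf}\subseteq \mathcal H_0$, and the perfectness of $\S$ follows by the same kind of argument used for $\overline{\mathcal H}_0$ (one can take free products with $\Z$ of an approximating sequence, or cite the relevant result from \cite{Osi21}/\cite{Min}); alternatively, one restricts the whole discussion of Section \ref{SecGen} to the torsion-free setting, which is legitimate because Theorem \ref{thm:Ols-1}(d) guarantees the quotients produced by Olshanskii's construction stay torsion-free. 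Since hyperbolic groups are finitely presented, Corollary \ref{fp} then shows $\S$ is quasi-isometrically diverse.

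Next I would establish that the relevant algebraic properties are generic in $\S=\overline{\mathcal H_{tf}}$. Let $TM$ denote the property of being a torsion-free Tarski Monster, $QHyp_{tf}$ the property of being a common quotient of all non-cyclic torsion-free hyperbolic groups, and $MM$ the property that every maximal subgroup is malnormal and that all maximal subgroups are conjugate. The first task is to check each of these is a $G_\delta$-property on $\G$ (equivalently, restricted to $\S$); the cleanest route is to exhibit $\Pi_2$-sentences in $\L_{\omega_1,\omega}$ defining them and invoke Proposition \ref{Gd}. For $QHyp_{tf}$ one argues as in Corollary \ref{Gdcor}: there are only countably many torsion-free hyperbolic groups, each finitely presented, and the set of quotients of a finitely presented group is open, so $QHyp_{tf}(\G)$ is a countable intersection of open sets. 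Being a torsion-free Tarski Monster is itself a $G_\delta$ condition: ``every non-trivial element generates a finite-index (equivalently, for these groups, the whole cyclic) structure'' can be phrased as a $\Pi_2$-sentence asserting, for all $g,h\neq 1$, that $h$ lies in the cyclic subgroup generated by some root of $g$ — this is the standard way Tarski Monster-type conditions are rendered $\Pi_2$, and torsion-freeness is the $\Pi_2$-sentence $\forall g\,\forall n\,(g^n=1\to g=1)$. Malnormality and conjugacy of maximal subgroups are similarly expressible; conjugacy of maximals, in particular, follows automatically once one knows that in these monsters there is essentially one conjugacy class of maximal subgroup, which is part of the structure theory.

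Then I would prove density: by Corollary \ref{dense}, it suffices to show every $G\in\mathcal H_{tf}$ (or $\mathcal H_0$, restricted appropriately) has a quotient lying in $TM(\S)\cap QHyp_{tf}(\S)\cap MM(\S)$. This is exactly where I would invoke \cite{Min}, where the first author constructs torsion-free Tarski Monsters that are common quotients of all non-cyclic torsion-free hyperbolic groups and have the malnormality/conjugacy properties for maximal subgroups; as with the argument for Proposition \ref{P1P3}, the relevant monster $T$ is built as the direct limit of a sequence $G_1\to G_2\to\cdots$ of non-elementary torsion-free hyperbolic groups with trivial finite radical, so $T\in\overline{\mathcal H_{tf}}$ by Example \ref{limex}, and every $G\in\mathcal H_{tf}$ surjects onto such a $T$. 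Corollary \ref{dense} (which only uses Lemma \ref{nc}, and whose proof goes through verbatim in the torsion-free category thanks to Theorem \ref{thm:Ols-1}(d)) then gives density of each of the three properties. Since a finite — hence countable — conjunction of $G_\delta$ dense subsets of the Polish space $\S$ is again comeagre, the conjunction $TM\wedge QHyp_{tf}\wedge MM$ is generic in $\S$, and combining this with quasi-isometric diversity of $\S$ yields $2^{\aleph_0}$ pairwise non-quasi-isometric groups with all the stated properties.

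The main obstacle is not the topological machinery — which is essentially identical to the proof of Corollary \ref{h} — but the bookkeeping needed to confirm that the three model-theoretic conditions are genuinely $\Pi_2$-definable (or at least $G_\delta$) and, more delicately, that the constructions in \cite{Min} really do produce the quotients inside $\overline{\mathcal H_{tf}}$ with all properties simultaneously, i.e.\ that one can run the inductive small-cancellation construction so as to kill all proper subgroups down to cyclic ones, keep the torsion-freeness, force the common-quotient property, and arrange malnormality and a single conjugacy class of maximals all at once. As with Proposition \ref{P1P3}, this amounts to checking that the auxiliary properties of the intermediate groups $G_i$ (non-elementary, torsion-free, trivial finite radical) are maintained throughout — something stated and used in \cite{Min} even if not always highlighted in the theorem statements there. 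Once that is granted, the proof is a routine application of Corollary \ref{fp} and Corollary \ref{dense}.
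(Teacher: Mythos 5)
Your overall template (genericity in a perfect closure of a class of finitely presented hyperbolic groups, plus Corollary \ref{fp}) is the right one, and you correctly identify \cite{Min} as the source of the required quotients. But there is a genuine gap at the step you dismiss as ``similarly expressible'': the properties ``every maximal subgroup is malnormal'' and ``all maximal subgroups are conjugate'' quantify over \emph{subgroups}, and it is not clear how to render them as $\Pi_2$-sentences of $\L_{\omega_1,\omega}$ (even with the constants of $\L'$) over your class $\overline{\mathcal H_{tf}}$, where no generator is tied to the subgroup structure. Without a $G_\delta$ (or at least comeagre-from-dense) formulation, density of these properties does not yield genericity, and your argument stalls exactly there. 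The paper circumvents this by changing the base class: it works with marked pairs $(H,\{a,b\})$ where $H$ is non-cyclic torsion-free hyperbolic \emph{and $\langle a\rangle$ is malnormal in $H$} (the set $\mathcal K$), and replaces the subgroup-quantified properties by element-wise conditions relative to the designated generator $a$ — namely (a) every element is conjugate to a power of $a$, and (b) $\langle a^k,g\rangle=G$ for every $g\notin\langle a\rangle$ and every $k$ — which are visibly $\Pi_2$ in $\L'$ and which \emph{imply} that every proper subgroup is conjugate into $\langle a\rangle$, hence that all maximal subgroups are the conjugates of the malnormal subgroup $\langle a\rangle$. This is also why the paper needs part (e) of Theorem \ref{thm:Ols-1} (preservation of centralizers), not only part (d): malnormality of $\langle a\rangle$ must survive the approximating quotients so that they stay in $\mathcal K$; your appeal to (d) alone would not control this.

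Two smaller points. First, your perfectness claim for $\overline{\mathcal H_{tf}}$ is hand-waved; the paper's argument is cleaner and worth noting: $\mathcal K$ and the comeagre set $\mathcal T$ of limit monsters are both dense in $\overline{\mathcal K}$ and disjoint (no hyperbolic group satisfies condition (a)), so $\overline{\mathcal K}$ can have no isolated points. Second, the paper does not pass through Corollary \ref{h}'s ``conjunction of separately generic properties'' route: it defines the single $G_\delta$ set $\mathcal T$ by the conjunction of (a), (b), and the common-quotient condition, shows every group in $\mathcal K$ has a quotient in $\mathcal T$ via \cite[Corollary 3]{Min}, and gets density by the argument of Corollary \ref{dense}. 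Once you repair the definability issue by adopting the marked-generator formulation, the rest of your outline goes through.
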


\begin{proof} Consider the set $\mathcal K$ of all marked groups $(H, \{ a, b\})\in \mathcal G$ such that $H$ is non-cyclic, torsion-free, hyperbolic and $\langle a\rangle $ is malnormal in $H$.
Let $\overline{\mathcal K}$ denote the closure of $\mathcal K$ in $\mathcal{G}$, and let $\mathcal T$ be the set of all marked groups $(G,\{a,b\}) \in \overline{\mathcal K}$ such that:
\begin{enumerate}
  \item[(a)] every element of $G$ is conjugate to a power of $a$;
  \item[(b)] for each $g \in G \setminus \langle a \rangle$ and all $k \in \N$, one has $\langle a^k,g \rangle =G$;
  \item[(c)] $G$ is a quotient of every non-cyclic torsion-free hyperbolic group.
\end{enumerate}
Arguing as in the proof of Proposition~\ref{P1P3} (and using the language with constants $\L^\prime$, introduced in the proof of Proposition \ref{Gd}), it is not hard to show that $\mathcal T$ is a $G_\delta$-set in $\G$. The proof of \cite[Corollary 3]{Min} shows that every group $(H, \{ a, b\})\in \mathcal K$ has a quotient in $\mathcal T$. Arguing as in the proof of Corollary~\ref{dense}, we can show that $\mathcal T$ is dense in $\overline{\mathcal K}$ (one has to use parts (d) and (e) of Theorem \ref{thm:Ols-1} here).  Thus, $\mathcal T$ is comeagre in $\overline{\mathcal K}$. Clearly, every group in $\mathcal T$ is a torsion-free Tarski Monster such that all maximal subgroups are cyclic, malnormal and conjugate to each other.
Both $\mathcal K$ and $\mathcal{T}$ are dense in $\overline{\mathcal K}$ and $\mathcal K \cap \mathcal{T}=\emptyset$ (as no group $G \in \mathcal K$
can satisfy property (a) above), so $\overline{\mathcal K}$ is perfect and we can apply Corollary~\ref{fp}  to get the desired result.
\end{proof}

Finally, we outline a construction of $2^{\aleph_0}$ quasi-isometry classes of finitely generated groups with exactly two conjugacy classes (i.e., groups where all non-trivial elements are conjugate). The first examples of such groups, other than $\Z_2$, were constructed in \cite{Osi10}.

Let $\mathcal {A}$ denote the set of all $(G,X)\in \mathcal G$, where $G$ is a torsion-free finitely presented acylindrically hyperbolic group (see \cite{Osi16} for the definition). Using ideas from \cite{Osi10} and \cite{Hull}, it is possible to show that the set of groups with two conjugacy classes is dense in $\overline{\mathcal A}$. Note also that the property ``all non-trivial elements are conjugate" can be expressed by the $\forall\exists$-sentence
$$
\forall \, g \; \forall\, h\; \exists\, t\;\; (g=1\, \vee \, h=1\, \vee \,  t^{-1}gt=h).
$$
Therefore, the property of having two conjugacy classes is generic in $\overline{\mathcal A}$ and the desired result follows. Note that we cannot consider any set of hyperbolic groups instead of $\mathcal A$ here since groups with two conjugacy classes do not occur among limits of hyperbolic groups in $\G$, as explained in the introduction of \cite{Osi10}.

\section{Quasi-isometric diversity of central extensions}\label{sec:central_ext}
The main goal of this section is to prove Corollary \ref{cbm} but we work in slightly more general settings than necessary.

Let $\{E_i\mid i\in \mathbb N\}$
be a collection of non-trivial abelian groups and let
\begin{equation}\label{E}
E=\bigoplus\limits_{i\in \mathbb N} E_i.
\end{equation}
Let $(G,X) \in \mathcal{G}$ and assume that $G$ contains $E$ as a central subgroup. For each subset $I\subseteq \mathbb N$, we let
\begin{equation}\label{GI}
G_I=G/E_I, \;\;\; {\rm where}\;\;\; E_I=\langle E_i \mid i\in I\rangle .
\end{equation}
Finally, let
\begin{equation}\label{SGI}
\mathcal{S}=\{ (G_I, X_I)\mid I\subseteq \mathbb N\},
\end{equation}
where $X_I$ is the natural image of $X$ in $G_I$.

Let $\mathcal{C}$ denote the Cantor set, which we identify with $2^{\mathbb N}$ endowed with the product topology. In this notation, we have the following.

\begin{lem}\label{S}
The map $\mathcal{C}\to \mathcal G$, given by $I\mapsto (G_I,X_I)$, is continuous. In particular, the set $\mathcal{S}$ is closed in $\mathcal G$.
\end{lem}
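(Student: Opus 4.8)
The plan is to show continuity of the map $\Phi\colon \mathcal{C}\to\mathcal{G}$, $I\mapsto(G_I,X_I)$, and then deduce that $\mathcal{S}=\Phi(\mathcal{C})$ is closed because $\mathcal{C}$ is compact and $\mathcal{G}$ is Hausdorff. The key is to recognize $\Phi$ as a composition of the map $\mathcal{C}\to\mathcal{N}(G)$ sending $I\mapsto E_I=\langle E_i\mid i\in I\rangle$ with the continuous map $\mathcal{N}(G)\to\mathcal{G}$, $N\mapsto(G/N,X_N)$ provided by Lemma~\ref{NG}. So it suffices to prove that $I\mapsto E_I$ is continuous from $2^{\mathbb{N}}$ to $\mathcal{N}(G)$.

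To verify this, recall that a basic open neighborhood of a normal subgroup $N\lhd G$ in $\mathcal{N}(G)$ is determined by a finite subset $P\subseteq G$: it consists of all $M\lhd G$ with $M\cap P=N\cap P$. So I would fix $I\in 2^{\mathbb{N}}$ and a finite $P\subseteq G$, and find a basic neighborhood of $I$ in $2^{\mathbb{N}}$ (i.e.\ a restriction on finitely many coordinates $1,\dots,k$) on which $E_J\cap P = E_I\cap P$. The point is that $E=\bigoplus_i E_i$ is a direct sum, so each $g\in E$ has a well-defined finite \emph{support} $\mathrm{supp}(g)\subseteq\mathbb{N}$, namely the set of indices $i$ where its $E_i$-component is non-trivial, and $g\in E_J$ if and only if $\mathrm{supp}(g)\subseteq J$. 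Choose $k\in\mathbb{N}$ large enough that $\mathrm{supp}(g)\subseteq\{1,\dots,k\}$ for every $g\in P\cap E$ (a finite condition, since $P$ is finite); then for any $J$ agreeing with $I$ on $\{1,\dots,k\}$ and any $g\in P$, one has $g\in E_J\iff g\in E\text{ and }\mathrm{supp}(g)\subseteq J\iff g\in E\text{ and }\mathrm{supp}(g)\subseteq I\iff g\in E_I$, using that $\mathrm{supp}(g)\subseteq\{1,\dots,k\}$ and $I\cap\{1,\dots,k\}=J\cap\{1,\dots,k\}$. Hence $E_J\cap P=E_I\cap P$, which is exactly what continuity requires.

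Then continuity of $\Phi$ follows by composing with Lemma~\ref{NG}, and $\mathcal{S}=\Phi(2^{\mathbb{N}})$ is the continuous image of a compact space, hence compact; since $\mathcal{G}$ is Hausdorff (indeed Polish, as noted in Section~\ref{sec:3}), $\mathcal{S}$ is closed in $\mathcal{G}$. I do not expect any serious obstacle here: the only mild subtlety is making the support bookkeeping precise for a general direct sum of abelian groups $E_i$ (as opposed to, say, copies of $\mathbb{Z}_p$), but the argument above handles that uniformly. One should just be careful to phrase things in terms of the $E_i$-components of elements of $E$ rather than assuming each $E_i$ is finite or cyclic.
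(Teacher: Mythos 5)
Your proposal is correct and follows essentially the same route as the paper: factor the map through $\mathcal N(G)$ via $I\mapsto E_I$ and Lemma~\ref{NG}, verify continuity of the first factor using the fact that a finite subset $P\subseteq G$ meets $E$ only in elements supported on finitely many coordinates, and deduce closedness of $\mathcal S$ from compactness of $\mathcal C$ and Hausdorffness of $\mathcal G$. Your explicit support bookkeeping is just an unwound version of the paper's identity $E_I\cap P=E_{I\cap K}\cap P$ for a suitable finite $K=K(P)$, so there is nothing to add.
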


\begin{proof}
We think of our map $\mathcal{C}\to \mathcal G$ as a composition of the maps
\[
\mathcal{C}\xrightarrow{\,\;\alpha\,\;} \mathcal N(G)\xrightarrow{\,\;\beta\;\,}  \mathcal G,
\]
where $\alpha (I)= E_I$ for all $I\in \mathcal{C}$ and $\beta $ is the map described in Lemma \ref{NG}.
First let us check that $\alpha$ is continuous. Indeed, for any finite subset $P \subseteq G$ there is a finite subset $K=K(P) \subset \N$ such that
$E \cap P=E_{K} \cap P$. So,  if $I,J \subseteq \N$ satisfy $I \cap K=J \cap K$,
then \[E_I \cap P=E_I \cap E_K \cap P=E_{I\cap K} \cap P=E_{J \cap K} \cap P=E_J \cap E_K \cap P=E_J \cap P.\]
This shows that $\alpha$ is continuous.
By Lemma \ref{NG}, $\beta $ is also continuous, hence so is the composition $\beta \circ \alpha$.
\end{proof}

Thus, we can apply Theorem \ref{main} to $\mathcal{S}$ whenever we can ensure the existence of non-quasi-isometric groups in every non-empty open subset of $\mathcal{S}$.

\begin{prop}\label{ce}
In the notation introduced above, suppose that every $E_i$ is finite. Then the set $\mathcal{S}$ contains either $1$ or $2^{\aleph_0}$ quasi-isometry classes.
\end{prop}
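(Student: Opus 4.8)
The plan is to apply Theorem~\ref{main} to the closed set $\mathcal S$, so the only thing to establish is the following dichotomy: \emph{either} every non-empty open subset of $\mathcal S$ contains two non-quasi-isometric groups (in which case $\mathcal S$ is quasi-isometrically diverse, hence certainly has $2^{\aleph_0}$ quasi-isometry classes), \emph{or} some non-empty open $\mathcal U\subseteq\mathcal S$ consists of pairwise quasi-isometric groups, and in the latter case one shows that \emph{all} of $\mathcal S$ lies in a single quasi-isometry class. Thus the heart of the matter is a homogeneity statement: if two marked groups $(G_I,X_I)$ and $(G_J,X_J)$ agree on a large enough ball (i.e. are close in $\mathcal G$), then they are quasi-isometric, with a quasi-isometry constant depending only on how large the ball is. Combined with the fact that $\mathcal S$ is the continuous image of the connected-in-the-relevant-sense Cantor set $\mathcal C$ (Lemma~\ref{S}), a local-to-global / chaining argument then propagates quasi-isometry from the open set $\mathcal U$ to all of $\mathcal S$.

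The key step, and the place where finiteness of the $E_i$ is used, is the following: since $E=\bigoplus E_i$ is central in $G$ and each $E_i$ is finite, the subgroup $E_I=\langle E_i\mid i\in I\rangle$ is a central subgroup in which every element has finite order, and for a fixed finite radius $r$ the intersection $E_I\cap B_{G,X}(r')$ depends only on $I\cap K$ for a finite set $K=K(r')$. The point is that if $I\cap K=J\cap K$ (so $(G_I,X_I)$ and $(G_J,X_J)$ are $r$-locally isomorphic for suitable $r$), then $G_I$ and $G_J$ differ only by quotienting out, respectively absorbing, a \emph{central subgroup of bounded exponent lying in a bounded ball}; more precisely both map onto $G_{I\cup J}$ with kernels contained in a fixed finite subgroup $E_K$ of $G$. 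A quotient map $G_I\to G_{I\cup J}$ with finite central kernel is a quasi-isometry (the kernel being finite), and the constant is controlled by $|E_K|$ and the word length of its elements in $X_I$, which in turn is controlled by $r$. Hence $(G_I,X_I)\sim_{q.i.}(G_J,X_J)$ whenever they are $r$-locally isomorphic, and moreover $(G_I,X_I)\sim_{C(r)-q.i.}(G_{I\cup J},X_{I\cup J})\sim_{C(r)-q.i.}(G_J,X_J)$ with a constant $C(r)$ depending only on $r$ (through $K(2r)$ and $\max\{|e|_{X}\mid e\in E_{K(2r)}\}$).

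With this uniform local quasi-isometry in hand, suppose some non-empty open $\mathcal U\subseteq\mathcal S$ consists of pairwise quasi-isometric groups; fix $(G_{I_0},X_{I_0})\in\mathcal U$ and fix $r$ large enough that the $r$-ball neighbourhood of $(G_{I_0},X_{I_0})$ in $\mathcal S$ is contained in $\mathcal U$. Given an arbitrary $(G_J,X_J)\in\mathcal S$, I would connect $J$ to $I_0$ by a finite chain $J=J_0,J_1,\dots,J_m=I_0$ in $\mathcal C$ in which consecutive sets agree outside the finite block $K(2r)$ (for instance, first change the coordinates of $J$ inside $K(2r)$ one at a time to match those of $I_0$, then — using that after this the two sets already agree on $K(2r)$ — argue $(G_J,X_J)$ is $r$-locally isomorphic to $(G_{I_0\cap K\ \cup\ J\setminus K},\dots)$, which lies in $\mathcal U$); along such a chain each consecutive pair is $r$-locally isomorphic, hence quasi-isometric by the previous paragraph, and being inside $\mathcal U$ the endpoint is quasi-isometric to $(G_{I_0},X_{I_0})$. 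Since quasi-isometry is transitive, $(G_J,X_J)\sim_{q.i.}(G_{I_0},X_{I_0})$, so $\mathcal S$ is a single quasi-isometry class. This establishes the dichotomy and completes the proof.

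The main obstacle I anticipate is making the uniformity in the key step genuinely clean: one must check that the quasi-isometry $G_I\to G_{I\cup J}$ obtained by killing the finite central subgroup $(E_J\cap E_K)E_I/E_I$ has a constant depending only on $r$ and not on $I$, which requires that the relevant finite subgroups of $G_I$ all lie in the image of the \emph{fixed} finite subgroup $E_{K(2r)}\le G$ and that their word lengths in $X_I$ are bounded by the word lengths of preimages in $(G,X)$ — both of which follow from $E_I$ being central and the image map not increasing word length, but deserve careful bookkeeping. A secondary subtlety is ensuring the chaining argument stays inside $\mathcal U$: one should choose $r$ at the outset so that $r$-local isomorphism to $(G_{I_0},X_{I_0})$ guarantees membership in $\mathcal U$, and verify that the intermediate groups in the chain are indeed $r$-locally isomorphic to $(G_{I_0},X_{I_0})$, which again comes down to the block $K(2r)$ computation in Lemma~\ref{S}.
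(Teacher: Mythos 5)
Your overall architecture (the dichotomy: either every non-empty open subset of $\mathcal S$ contains two non-quasi-isometric groups and Theorem~\ref{main} applies, or some non-empty open $\mathcal U$ is a single quasi-isometry class and one propagates this to all of $\mathcal S$) is sound. But your ``key step'' as stated is false. If $I\cap K=J\cap K$ for a finite block $K$, the kernel of $G_I\to G_{I\cup J}$ is $E_{I\cup J}/E_I\cong E_{J\setminus I}$, which is infinite whenever $J\setminus I$ is infinite; it is contained in the image of $E_K$ only if $J\subseteq I\cup K$. So $r$-local isomorphism of $(G_I,X_I)$ and $(G_J,X_J)$ does \emph{not} imply quasi-isometry, let alone with a constant $C(r)$. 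The paper's own application furnishes a counterexample: for Hall's group take $I=\emptyset$ and $J=\N\setminus K$; then $I\cap K=J\cap K=\emptyset$, yet $G_J$ is quasi-isometric to the lamplighter $G_{\N}$ while $G_{\emptyset}=G$ is not. Indeed, if your homogeneity claim held, $\mathcal S$ would always have only finitely many quasi-isometry classes (one for each subset of a single finite block), contradicting the very proposition being proved.

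Fortunately your chaining paragraph does not actually need that claim, and can be repaired. The consecutive sets in your chain differ by a single index $i\in K$ — so they are \emph{not} $r$-locally isomorphic, but the corresponding groups differ by the (finite, central) image of $E_i$ and hence are quasi-isometric; and the terminal set $J'=(I_0\cap K)\cup(J\setminus K)$ lands in $\mathcal U$ by continuity of $I\mapsto (G_I,X_I)$ (Lemma~\ref{S}) and openness of $\mathcal U$, not by any metric comparison. Once rewritten this way, the only inputs are: finite symmetric difference of index sets gives commensurability up to finite kernels, hence quasi-isometry; and the finite-modification orbit of any $I$ is dense in $\mathcal S$. These are exactly the inputs of the paper's proof, which runs the dichotomy in the other direction and avoids chaining altogether: if some $G_I\not\sim_{q.i.}G_J$, then the dense subsets $\mathcal S_I=\{(G_{K'},X_{K'})\mid |K'\vartriangle I|<\infty\}$ and $\mathcal S_J$ consist of groups quasi-isometric to $G_I$ and to $G_J$ respectively, so every non-empty open subset of $\mathcal S$ meets both and Theorem~\ref{main} applies directly.
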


\begin{proof}
Suppose that there exist $I,J\subseteq \mathbb N$ such that $G_I$ and $G_J$ are not quasi-isometric. Let
\[
\mathcal{S}_I=\{ (G_K, X_K)\mid K \subseteq \N \mbox{ and } |K\vartriangle I|<\infty\} \subseteq \mathcal{S}, \]
and similarly we define $\mathcal{S}_J \subseteq \mathcal{S}$.
Note that both $\mathcal{S}_I$ and $\mathcal{S}_J$ are dense in $\mathcal{S}$ and all groups in $\mathcal{S}_I$ (respectively, $\mathcal{S}_J$) are quasi-isometric to $G_I$ (respectively, $G_J$), because they are commensurable up to finite kernels.
This allows us to apply Theorem~\ref{main} and conclude that $\mathcal{S}$ contains $2^{\aleph_0}$ quasi-isometry classes.
\end{proof}

\begin{proof}[Proof of Corollary \ref{cbm}]
In the proof  of \cite[Theorem 7]{Hall} (in the particular case $p=2$), Hall constructed a group $G$, generated by two elements, which splits as
\[
\{1\}\to E\to G\to \mathbb Z_2 {\,\rm wr\,}\mathbb Z \to \{1\},
\]
where the subgroup $E\cong {\Z_2}^\infty $ is central in $G$. We define $G_I$ as above (with $E_i\cong \mathbb Z_2$).

Eskin, Fisher and Whyte \cite[Theorem 1.3]{EFW} showed that if a finitely generated group is quasi-isometric to the lamplighter group $\mathbb Z_2 \,{\rm wr}\, \mathbb Z$, then it has a finite index subgroup that acts properly and cocompactly on the Diestel--Leader graph $\DL(n, n)$ for some $n\in \mathbb N$.

An algebraic description of lattices in the isometry group of the Diestel--Leader graph $\DL(n, n)$ was given by Cornulier, Fisher and Kashyap in \cite{CFK}. In particular, they showed in \cite[Theorem 1.1]{CFK} that every such lattice $R$ contains a normal subgroup $H$ such that $R/H\cong\mathbb Z$; moreover, if $t$ is any element of $R$ mapping onto a generator of $R/H$, then $H$ contains two subgroups $L$ and $L^\prime$ with the following properties:
\begin{equation}\label{L1}
\bigcup_{k\in \Z} t^{-k}Lt^k=\bigcup_{k\in \Z} t^{k}L^\prime t^{-k} =H
\end{equation} and
\begin{equation}\label{L2}
|L\cap L^\prime |<\infty.
\end{equation}

Assume now that $G_\mathbb N\sim _{q.i.} G_\emptyset= \mathbb Z_2 \,{\rm wr}\, \mathbb Z$. Then the lattice $R$ in the group $\operatorname{Isom}(\DL(n,n))$ corresponding to $G_\mathbb N$ will have an infinite center $Z$. In the notation introduced in the previous paragraph, condition (\ref{L1}) obviously implies that $Z\le L\cap L^\prime$, which contradicts (\ref{L2}). Thus $G_\mathbb N\not\sim _{q.i.} G_\emptyset$. Therefore, $\mathcal{S}$ contains $2^{\aleph_0}$ quasi-isometry classes by Proposition~\ref{ce}.
\end{proof}

We also record a general result, which accounts for the Cornulier--Tessera construction of $2^{\aleph_0}$ quasi-isometry classes of solvable groups (see Example \ref{ex2}).

\begin{cor}\label{fpZ}
Let $G$ be a finitely presented group containing a central subgroup of the form $E=\oplus_{i\in \mathbb N} E_i$, where $E_i$ are arbitrary non-trivial abelian groups. Then $G$ has $2^{\aleph_0}$ pairwise non-quasi-isometric quotients.
\end{cor}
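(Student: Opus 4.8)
The plan is to place $G$ into the general framework of this section and then apply Theorem~\ref{main}, using finite presentability as the invariant that separates quasi-isometry classes in every open set. The only subtlety is that the $E_i$ are arbitrary abelian groups, so $G/E_I$ need not be finitely presented even when $I$ is finite; a preliminary reduction takes care of this.

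First I would reduce to the case of cyclic summands. For each $i$ choose a nontrivial $e_i\in E_i$ and set $F_i=\langle e_i\rangle$. Since $F_i\le E_i$ and the $E_i$ form an internal direct sum inside $G$, the subgroup $E'=\bigoplus_{i\in\mathbb N}F_i\le E\le Z(G)$ is again central and is an honest direct sum. Running the construction of this section (see \eqref{GI} and \eqref{SGI}) with the family $\{F_i\mid i\in\mathbb N\}$ in place of $\{E_i\mid i\in\mathbb N\}$ produces quotients $G_I=G/E'_I$, where $E'_I=\bigoplus_{i\in I}F_i$, and the set $\mathcal S=\{(G_I,X_I)\mid I\subseteq\mathbb N\}$, which is closed in $\mathcal G$ by Lemma~\ref{S} applied to this new family. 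Since each $G_I$ is a quotient of $G$, it then suffices to show that $\mathcal S$ contains $2^{\aleph_0}$ quasi-isometry classes.

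Next I would record two elementary facts about the $G_I$. If $I$ is finite then $E'_I$ is a finitely generated group, so $G_I=G/E'_I$ is finitely presented, being the quotient of the finitely presented group $G$ by the normal closure of a finite set. If $I$ is infinite then $E'_I$ is not finitely generated, and, because it is central in $G$, it is not the normal closure of any finite subset of $G$; by the classical fact that the kernel of an epimorphism between finitely presented groups is normally finitely generated, $G_I$ is then not finitely presented. Now let $\mathcal U\subseteq\mathcal S$ be a nonempty open subset. Since the surjection $2^{\mathbb N}\to\mathcal S$, $I\mapsto(G_I,X_I)$, is continuous by Lemma~\ref{S}, the preimage of $\mathcal U$ is a nonempty open subset of the Cantor set and hence contains a cylinder $\{I\subseteq\mathbb N\mid I\cap F=s\}$ for some finite $F\subseteq\mathbb N$ and $s\subseteq F$. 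Taking $I=s$ and $J=s\cup(\mathbb N\setminus F)$ — the latter infinite, as $\mathbb N\setminus F$ is — we get $(G_s,X_s),(G_J,X_J)\in\mathcal U$ with $G_s$ finitely presented and $G_J$ not. As finite presentability is a quasi-isometry invariant \cite[I.8.24]{BH}, we have $G_s\not\sim_{q.i.}G_J$, so $\mathcal U$ contains two non-quasi-isometric groups. Theorem~\ref{main} now applies to $\mathcal S$, and taking $\mathcal S$ itself as the comeagre subset in the definition of quasi-isometric diversity yields that $\mathcal S$ has $2^{\aleph_0}$ quasi-isometry classes, i.e., $G$ has $2^{\aleph_0}$ pairwise non-quasi-isometric quotients.

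There is no serious obstacle; the proof is short once the framework of this section is in place. The only point that genuinely needs care is the reduction to the cyclic subgroups $F_i$, which is what makes the ``small'' quotients $G_I$ (for finite $I$) finitely presented and thereby lets finite presentability serve as the separating invariant. I also note that the cylinder argument above simultaneously shows that $\mathcal S$ has no isolated points, hence is a perfect set with $\{(G_I,X_I)\mid I\text{ finite}\}$ as a dense subset of finitely presented groups, so one could equally well conclude by invoking Corollary~\ref{fp} directly in place of Theorem~\ref{main}.
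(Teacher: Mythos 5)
Your proof is correct and follows essentially the same route as the paper: reduce to finitely generated (cyclic) summands, observe that the quotients $G_I$ with $I$ finite are finitely presented and form a dense subset of the closed set $\mathcal{S}$ from Lemma~\ref{S}, and use quasi-isometry invariance of finite presentability to separate classes. The paper simply invokes Corollary~\ref{fp} at this point (as you note one could), whereas you verify the hypothesis of Theorem~\ref{main} directly by exhibiting a non-finitely-presented $G_J$ in every basic open set; this is only a cosmetic difference.
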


\begin{proof}
Let $\mathcal S$ be defined by (\ref{SGI}) and let $\mathcal{S}_0=\{ (G_I, X_I)\mid |I|<\infty\}$. Without loss of generality we  can assume that each $E_i$ is finitely generated. Since $G$ is finitely presented, so is every group from $\mathcal{S}_0$. As $\mathcal{S}_0$ is dense in $\mathcal{S}$ and $\mathcal S$ is perfect by Lemma~\ref{S}, Corollary~\ref{fp} applies.
\end{proof}

Finally, we provide an example of a non-empty perfect subset of $\G$ consisting of quasi-isometric groups. It shows that in Corollary~\ref{fp}
the assumption that $\S$ contains a dense subset of finitely presented groups is essential.

\begin{ex}\label{pqi}
Let $G$ be any $n$-generated group containing a central subgroup of the form $E=\bigoplus_{i\in \mathbb N\cup\{0\}} E_i$, where $E_i\cong \mathbb Z_2$ (e.g., Hall's group used in the proof of Corollary \ref{cbm}). Let $e_i$ be the non-trivial element of $E_i$, $i \in \N$. For every $I\subseteq \N$, we define $Q_I$ to be the quotient of $G$ obtained by imposing relations
$$
e_i=1~~\forall\, i\in I \;\;\; {\rm and}\;\;\; e_j=e_0~~ \forall\, j\in \mathbb N\setminus I.
$$
As above, it is not difficult to check that the groups $Q_I$ (with the appropriate generating sets) form a perfect subset $\mathcal{U}$ of $\mathcal G$. Obviously, all groups from $\mathcal{U}$ are commensurable up to finite kernels  and, therefore, quasi-isometric.
\end{ex}

\end{document}